\newcommand{ \supp }{ \operatorname{supp} }
\title{Ponzi schemes on coarse spaces with uniform measure}
\author{Shunsuke MIYAUCHI}
\authorrunning{S. Miyauchi}
\institute{Graduate School of Mathematical Sciences, The University of Tokyo, Komaba, Meguro, Tokyo, 153, Japan, \email{miyashun@ms.u-tokyo.ac.jp}}
\begin{document}

\maketitle              
\begin{center}
{\itshape Dedicated to Professor Toshiyuki Kobayashi with great admire for his groundbreaking achievements}
\end{center}
\begin{abstract}
Ponzi schemes, defined by Block-Weinberger(1992) and Roe(2003), give a characterization of amenability from the viewpoint of coarse geometry. We consider measures in coarse spaces, and propose a reformulation of Ponzi schemes with measures.

\keywords{Amenability  \and Ponzi scheme \and Uniform measure.}
\end{abstract}
\section{Introduction}
Block-Weinberger~\cite{BW92} defined Ponzi schemes in metric spaces, and Roe~\cite{Roe03} extended them to coarse spaces. They proved for metric spaces the equivalence between the non-existence of a Ponzi scheme and amenability. Since Ponzi schemes are well-defined up to coarse equivalence, they give a characterization of amenability from the view point of coarse geometry. Roe~\cite{Roe03} extended the notion of amenability to general coarse spaces with Ponzi schemes according to the above consideration. In other words, a coarse space is called non-amenable if it has a Ponzi scheme.

In this paper, we consider a \lq measure\rq \ in a coarse space, which we call a uniform measure (Definition~\ref{def:um}), and propose a reformulation of Ponzi schemes with a uniform measure $\mu$, which we call a $\mu$-PS (Ponzi scheme with the uniform measure $\mu$, Definition~\ref{def:muPS}). Ponzi schemes in \cite{Roe03} are defined only with a coarse structure, using counting method. On the other hand, $\mu$-PSs are defined with a coarse structure, a $\sigma$-algebra and a uniform measure $\mu$. 

The main result is the following theorem.
\begin{theorem} {\rm(See Theorem~\ref{thm:muPS})}
Let $(X,\mathcal{E})$ be a measurable coarse space, $\mu$ be a uniform measure, and the measure space $(X,\mu)$ be $\sigma$-finite.
\begin{enumerate}
\item Assume $X$ has a $\mu$-PS and a quasi lattice $\Lambda$. Then, $X$ has a Ponzi scheme.
\item Assume $X$ has a Ponzi scheme $\theta$ such that $\supp \theta \subset S \times S$ for some uniform locally finite subset $S$, and there exists a measurable controlled set $E$ which is constant on S with respect to $\mu$. Then, $X$ has a $\mu$-PS. 
\end{enumerate}
\end{theorem}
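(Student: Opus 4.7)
The two parts are conceptually dual: part (1) converts the measurable $\mu$-PS into a discrete Ponzi scheme by collapsing $X$ onto a quasi lattice $\Lambda$, while part (2) thickens a discrete Ponzi scheme supported on $S$ into a measurable one via the controlled set $E$. In both directions the uniformity of $\mu$ is what makes the two normalizations line up.

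For part (1), the plan is to use $\Lambda$ to produce a measurable partition $\{C_\lambda\}_{\lambda \in \Lambda}$ of $X$ with uniformly bounded diameter and, by uniformity of $\mu$, with $\mu(C_\lambda)$ pinched between two positive constants. Given a $\mu$-PS $\eta$, push $\eta$ forward under the collapse $X \to \Lambda$ sending $C_\lambda$ to $\lambda$, yielding a kernel $\tilde\eta$ on $\Lambda \times \Lambda$. The verifications I expect are: the support remains controlled, since cell diameters are bounded and $\supp \eta$ was already controlled; the lower-bound mass condition at each $\lambda$ comes from the corresponding integral inequality of $\eta$ on $C_\lambda$ after dividing by $\mu(C_\lambda)$; and the strict-expansion condition survives because the comparability of the $\mu(C_\lambda)$ passes through. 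Finally, since $\Lambda \hookrightarrow X$ is a coarse equivalence, a Ponzi scheme on $\Lambda$ transfers to one on $X$ by a standard lift.

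For part (2), let $c = \mu(E[s])$, which by the constancy hypothesis is independent of $s \in S$. I plan to thicken $\theta$ by setting
\[
\eta \;=\; \sum_{(s,s') \in \supp \theta} \frac{\theta(s,s')}{c^2}\,(\mu \otimes \mu)\bigl|_{E[s] \times E[s']}.
\]
Its support lies in $E \circ \supp\theta \circ E^{-1}$, which is still controlled. The uniform local finiteness of $S$ ensures that each $x \in X$ lies in only boundedly many $E[s]$, so integrating $\eta$ against any measurable function reduces, up to a bounded multiplicity, to summing $\theta$ against averaged values. The $\mu$-PS inequalities then follow from the corresponding Ponzi inequalities for $\theta$, with $\sigma$-finiteness ensuring the construction lives in the right class of measures.

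The main obstacle is in part (2): the overlap multiplicities of the family $\{E[s]\}_{s \in S}$ can in principle spoil the strict inequality that a $\mu$-PS requires, because thickening amplifies weights non-canonically. Controlling these multiplicities is exactly the role of the uniform local finiteness of $S$, and the $s$-independence of $\mu(E[s])$ is what keeps the normalization $1/c^2$ honest. Making these two hypotheses cooperate quantitatively, so that the Ponzi gap of $\theta$ survives the thickening with a uniform positive margin, will be the technical heart of the argument.
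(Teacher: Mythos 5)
Your overall architecture matches the paper's --- part (1) collapses $X$ onto the quasi lattice via a measurable partition subordinate to $E_0$, and part (2) thickens $\theta$ by the kernel $\sum_{(s_1,s_2)}\theta(s_1,s_2)\,\chi_{E_{s_1}\times E_{s_2}}$ --- but part (1) of your plan contains a genuine gap. You assert that uniformity of $\mu$ pinches the cell measures $\mu(C_\lambda)$ \emph{between two positive constants}. Uniformity only gives the upper bound $\sup_x\mu(E_x)<\infty$; there is no lower bound in the definition, and none can be extracted from the hypotheses: after disjointifying the cover $\{(E_0)_\lambda\}$ into a partition, individual cells can be $\mu$-null (or empty). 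Your normalization ``divide by $\mu(C_\lambda)$'' is therefore ill-defined in general, and even where defined it can destroy the boundedness of the pushed-forward $1$-chain. The fix is simply not to normalize: define $\tilde\eta(\lambda,\lambda')=\iint_{C_\lambda\times C_{\lambda'}}\eta\,d\mu\otimes\mu$. Boundedness then needs only the upper bound $\mu(C_\lambda)\le\sup_x\mu((E_0)_x)$, and effectiveness of $\partial\tilde\eta$ follows because $\partial\tilde\eta(\lambda)=\int_{C_\lambda}\partial\eta\,d\mu$ and the cells covering $E_{x}$ recover the lower bound $\int_{E_x}\partial\eta\,d\mu\ge 1$. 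This is exactly what the paper does, packaged as functoriality: the collapse map $p\colon X\to\Lambda$ is shown to be measure effectively proper (an upper bound only) and coarsely equivalent, and Theorem~\ref{thm:muPScp} (via Proposition~\ref{prop:eff}, where the pushforward is defined by duality against $L^1$, with no division) transports the $\mu$-PS to a $\#|_\Lambda$-PS, i.e.\ a Ponzi scheme.

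Part (2) is essentially the paper's argument and is sound, but the obstacle you flag as ``the technical heart'' --- overlap multiplicities of $\{E_s\}_{s\in S}$ spoiling the strict inequality --- is not actually an issue. The boundary computation is linear and yields the exact identity $\partial_X c(x)=C\sum_{s\in S}\partial_S\theta(s)\,\chi_{E_s}(x)$ with $C=\mu(E_s)$; since $\theta$ is a Ponzi scheme, $\partial_S\theta\ge 0$, so overlapping $E_s$'s only add nonnegative mass and cannot erode the lower bound. Uniform local finiteness of $S$ is needed only for the boundedness of $c$ and for interchanging sum and integral, not to control a loss in the Ponzi gap; the effectiveness estimate $\int_{(\tilde E')_{x_0}}\partial_X c\,d\mu\ge C^2\sum_{s\in E'_{x_0}\cap S}\partial_S\theta(s)\ge C^2$ then comes directly from the effectiveness of $\partial_S\theta$ after enlarging $E'$ to a measurable controlled set containing $E'\circ E$.
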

This theorem gives a relationship between Ponzi schemes and what we call $\mu$-PSs. By using this theorem, we prove the relationship between the amenability of a discrete group $\Gamma$ and the existence of a $\mu$-PS on a space $X$ with respect to $\Gamma$-invariant measure $\mu$ as follows: 

\begin{corollary} {\rm (See Corollary~\ref{cor:gpactmuPS})}
Let a discrete group $\Gamma$ act properly and cocompactly on a locally compact second-countable Hausdorff space $X$, and $\mu$ be a $\Gamma$-invariant non zero regular measure on $X$. Then $\Gamma$ is non-amenable if and only if the coarse space $(X,\mathcal{E}^X_{\Gamma})$ has a $\mu$-PS.
\end{corollary}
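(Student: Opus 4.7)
The plan is to reduce the corollary to the main theorem together with the classical equivalence (Block-Weinberger, Roe) between non-amenability of a discrete group $\Gamma$ and the existence of a Ponzi scheme on $\Gamma$. Under the hypotheses on the action, $(X, \mathcal{E}^X_{\Gamma})$ is coarsely equivalent to $\Gamma$ (with its left-invariant coarse structure): picking $x_0 \in X$, the orbit map $g \mapsto g \cdot x_0$ is a coarse equivalence because properness of the action makes preimages of bounded sets bounded, and cocompactness makes the orbit coarsely dense. Since Ponzi schemes are a coarse invariant, $X$ has a Ponzi scheme if and only if $\Gamma$ does, i.e., if and only if $\Gamma$ is non-amenable.

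For the forward implication, assume $\Gamma$ is non-amenable, and apply part (2) of the main theorem. Set $S := \Gamma \cdot x_0$, and pull back the Ponzi scheme on $\Gamma$ along the orbit map to obtain a Ponzi scheme $\theta$ on $X$ with $\supp \theta \subset S \times S$. Properness forces $S$ to be uniformly locally finite in $\mathcal{E}^X_{\Gamma}$. For the required measurable controlled set $E$ that is constant on $S$ with respect to $\mu$, pick a relatively compact measurable fundamental domain $K$ for the $\Gamma$-action (available from local compactness, second countability, and properness), and set $E := \bigcup_{g \in \Gamma} gK \times gK$, thinned if necessary so that it lies in the coarse structure. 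Then $E$ is $\Gamma$-invariant and measurable, so $E[g \cdot x_0] = g \cdot E[x_0]$; the $\Gamma$-invariance of $\mu$ yields $\mu(E[s]) = \mu(E[x_0])$ independently of $s \in S$, and regularity of $\mu$ together with non-triviality guarantees this common value is finite and positive. Part (2) of the main theorem then produces a $\mu$-PS on $X$.

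For the converse, assume $X$ admits a $\mu$-PS. To invoke part (1) of the main theorem, a quasi lattice $\Lambda$ in $X$ is needed; the orbit $\Lambda := \Gamma \cdot x_0$ is the natural candidate, since properness and cocompactness make it a uniformly discrete, uniformly locally finite, coarsely dense subset of $X$. Part (1) then yields a Ponzi scheme on $X$, which transfers back to $\Gamma$ via the coarse equivalence above, and the classical characterization forces $\Gamma$ to be non-amenable.

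The main obstacle is in the forward direction: constructing a measurable controlled set $E$ whose fibres $E[s]$ have a common positive, finite $\mu$-measure for all $s \in S$. This demands a measurable fundamental domain (standard in the locally compact second-countable Hausdorff setting with a proper action, via a Bruhat-type section), verifying that its $\Gamma$-saturation actually belongs to $\mathcal{E}^X_{\Gamma}$ and is measurable, and matching the paper's precise definition of \emph{constant on $S$ with respect to $\mu$} with the orbit-wise equality provided by $\Gamma$-invariance. Once this compatibility is established, both implications follow immediately from the main theorem and the classical Block-Weinberger characterization.
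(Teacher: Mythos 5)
Your overall route coincides with the paper's: transfer Ponzi schemes through the orbit map coarse equivalence $\gamma\mapsto\gamma x_0$, use Roe's Theorem~3.53 (non-amenability of $\Gamma$ $\Leftrightarrow$ Ponzi scheme on $(\Gamma,\mathcal{E}^\Gamma_R)$), take $\Lambda=\Gamma x_1$ as the quasi lattice for part (1), and take $S=\Gamma x_0$ as the support for part (2). The one place you diverge is exactly the step you flag as the main obstacle, and there your construction is both harder than necessary and not actually completed: you invoke a relatively compact measurable strict fundamental domain $K$ and set $E=\bigcup_{g}gK\times gK$, ``thinned if necessary.'' This leaves real work undone (existence of such a $K$ for a proper action with possibly nontrivial point stabilizers; ensuring each orbit point lies in exactly one translate $g'K$ so that $\mu(E_s)$ is genuinely constant rather than a sum over several translates; positivity of $\mu(K)$), and none of it is needed.

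The paper's resolution exploits that the definition of ``constant on $S$ with respect to $\mu$'' only constrains the fibres $E_s$ for $s\in S$, not for all $x\in X$. So one simply picks a compact $C_0$ with $\mu(C_0)>0$ (which exists by $\sigma$-compactness and $\mu\neq 0$), a point $x_0\in C_0$, writes each $s\in S=\Gamma x_0$ as $s=\gamma_s x_0$, and sets $E_S:=\bigcup_{s\in S}\{s\}\times\gamma_s C_0$. This is closed (hence measurable) by properness, controlled because $E_S\subset E_{\Gamma,C_0}$, and $\mu((E_S)_s)=\mu(\gamma_s C_0)=\mu(C_0)$ by $\Gamma$-invariance; fibres over points outside $S$ are empty and irrelevant. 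If you replace your fundamental-domain construction by this one, your argument matches the paper's proof; as written, the forward direction has a gap at precisely the point you identified.
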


\section{Preliminaries and Notations}

In this section, we review the definition of coarse spaces and some notions in measure theory. We refer \cite{Roe03} and \cite{Co13} for more details.

Let $X$ be a set. We use the following notations:
\begin{itemize}
\item Let $E \subset X\times X$. Let $E^{T}$ denote the transpose set $\{ (x^{\prime},x): (x,x^\prime) \in E\}$.
\item Let $E_1, E_2 \subset X\times X$. Let $E_1\circ E_2$ denote the product of the sets $ \{ (x,z):(x,y)\in E_1, (y,z) \in E_2 \text{ for some  }y\in X\}$.
\item Let $E \subset X\times X, K \subset X$. Let $E[K]$ denote the set $\{ y\in X :(x,y)\in E \text{ for some }x\in K\}$. In particular, we write $E[\{x\}]:= E_x$.
\item Let $\Delta_X$ denote the diagonal set $\{ (x,x):  x \in X\}$.
\item Let $\varphi\colon X \to Y$, $E \subset X \times X$, and $F \subset Y \times Y$. Let $(\varphi \times \varphi)_* (E)$ denote the image $\{ (\varphi(x), \varphi(y)) : (x,y) \in E \}$, and let $(\varphi \times \varphi)^*(F)$ denote the preimage $\{ (x,y) : (\varphi(x), \varphi(y)) \in F \}$.
\end{itemize}

\begin{definition}{\rm (\cite[Def.\ 2.3]{Roe03}) }
A subset $\mathcal{E} \subset \mathcal{P}(X\times X)$ is a coarse structure if it satisfies the following properties: 
\begin{enumerate}
\item $\Delta_X \in \mathcal{E}$.
\item If $E \subset X\times X, E^\prime \in \mathcal{E},$ and $E \subset E^\prime,$ then $E \in \mathcal{E}$ (i.e., $\mathcal{E}$ is closed under the operation of subset).
\item If $E_1,E_2 \in \mathcal{E}$, then $E_1\cup E_2 \in \mathcal{E}$ (i.e., $\mathcal{E}$ is closed under the operation of finite union).
\item If $E_1,E_2 \in \mathcal{E}$, then $E_1\circ E_2 \in \mathcal{E}$ (i.e., $\mathcal{E}$ is closed under the operation of product).
\item If $E\in \mathcal{E}$, then $E^T \in \mathcal{E}$ (i.e., $\mathcal{E}$ is closed under transposing).
\end{enumerate}
An element $E$ in a coarse structure $\mathcal{E}$ is called a controlled set. A set equipped with a coarse structure $(X,\mathcal{E})$ is called a coarse space.

\end{definition}
\begin{definition}{\rm (\cite[Prop.\ 2.16]{Roe03})}
A subset $B$ in a coarse space $X$ is called a bounded set if there exists a controlled set $E$ and $x\in X$ such that $B \subset E_x$.
\end{definition}
In order to consider a map between coarse spaces, we introduce the following notions.
\begin{definition}\rm{ (\cite[Def.\ 2.14]{Roe03}) }
Let $(X,\mathcal{E})$ be a coarse space and $S$ be a set. Two maps $\varphi_1, \varphi_2\colon S \to X$ are close if $\{(\varphi_1(s), \varphi_2(s)): s \in S\}$ is a controlled set. Then, we write $\varphi_1 \sim \varphi_2$.
\end{definition}

\begin{definition}\rm{(\cite[Def.\ 2.21]{Roe03}\cite{Zava19})}  
Let $\varphi\colon(X,\mathcal{E}^X) \to (Y, \mathcal{E}^Y)$ be a map between coarse spaces.
\begin{itemize}
\item A map $\varphi$ is bornologous if $(\varphi \times \varphi)_*(E) \in \mathcal{E}^Y$ for any controlled set $E \in \mathcal{E}^X$. 
\item A map $\varphi$ is proper if $\varphi^{-1}(B)$ is bounded for any bounded set $B$ in $Y$.
\item A map $\varphi$ is a coarse map if it is proper  and bornologous.
\item A coarse map $\varphi$ is coarsely equivalent if there exists a coarse map $\psi\colon Y \to X$ such that $\psi \circ \varphi \sim id_X$ and $\varphi \circ \psi \sim id_Y$. Two spaces $X$ and $Y$ are coarsely equivalent if there exists a coarsely equivalent map between $X$ and $Y$.
\item A map $\varphi$ is effectively proper if $(\varphi \times \varphi)^*(F)\in \mathcal{E}^X $ for any controlled set $F \in \mathcal{E}^Y$.
\item A map $\varphi$ is coarsely surjective if there exists $F_0 \in \mathcal{E}^Y$ such that $Y =F_0[\varphi(X)]$.
\end{itemize}
\end{definition}

There is a well-known equivalent condition of the coarsely equivalence of a map. 

\begin{fact}\label{fact:ceq}{\rm (\cite{Zava19})}
The map $\varphi$ is coarsely equivalent if and only if it is bournologous, effectively proper, and coarsely surjective.
\end{fact}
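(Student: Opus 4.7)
The plan is to establish both implications of the biconditional (reading the final clause as ``coarsely surjective''). Throughout, let
\[
D_X := \{(\psi(\varphi(x)), x) : x \in X\}, \qquad D_Y := \{(\varphi(\psi(y)), y) : y \in Y\},
\]
which are controlled sets in $\mathcal{E}^X$ and $\mathcal{E}^Y$ respectively whenever the closeness relations $\psi \circ \varphi \sim \mathrm{id}_X$ and $\varphi \circ \psi \sim \mathrm{id}_Y$ hold.

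For the forward direction, assume $\varphi$ admits such a proper $\psi$. Coarse surjectivity is immediate: for every $y \in Y$ we have $(\varphi(\psi(y)), y) \in D_Y$, so $Y = D_Y[\varphi(X)]$. For effective properness, given $F \in \mathcal{E}^Y$ and $(x, x') \in (\varphi \times \varphi)^*(F)$, I would use the factorization
\[
(x, x') = (x, \psi(\varphi(x))) \circ (\psi(\varphi(x)), \psi(\varphi(x'))) \circ (\psi(\varphi(x')), x'),
\]
whose outer factors lie in $D_X^T$ and $D_X$ respectively and whose middle factor lies in $(\psi \times \psi)_*(F)$. Bornologousness of $\varphi$ follows from the symmetric factorization inside $Y \times Y$ built from $D_Y$. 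These two computations should be run in parallel, since each takes as input the bornologousness of the other map.

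For the backward direction, assume $\varphi$ is bornologous, effectively proper, and coarsely surjective, witnessed by $F_0 \in \mathcal{E}^Y$ with $Y = F_0[\varphi(X)]$. I would construct $\psi\colon Y \to X$ by choosing, for each $y \in Y$, a point $x_y \in X$ with $(\varphi(x_y), y) \in F_0$, and setting $\psi(y) := x_y$. Then $\varphi \circ \psi \sim \mathrm{id}_Y$ holds by construction, since $\{(\varphi(\psi(y)), y) : y \in Y\} \subset F_0$. For $\psi \circ \varphi \sim \mathrm{id}_X$, the set $\{(\psi(\varphi(x)), x) : x \in X\}$ is contained in $(\varphi \times \varphi)^*(F_0)$ and is therefore controlled by effective properness. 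Properness of $\psi$ follows from $\psi^{-1}(B) \subset F_0[\varphi(B)]$, using that $\varphi(B)$ is bounded whenever $B$ is, thanks to the bornologousness of $\varphi$.

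The main obstacle is the coordination in the forward direction: the factorization argument for effective properness of $\varphi$ requires $(\psi \times \psi)_*(F) \in \mathcal{E}^X$, i.e.\ $\psi$ to be bornologous, yet establishing bornologousness of $\varphi$ itself needs a matching factorization. The resolution is that the two factorizations interlock, so that bornologousness and effective properness of both $\varphi$ and $\psi$ are extracted simultaneously from the pair $D_X \in \mathcal{E}^X$ and $D_Y \in \mathcal{E}^Y$; once this bootstrap is carried out, the three required properties of $\varphi$ follow.
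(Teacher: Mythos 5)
The paper offers no proof of this Fact (it is quoted from \cite{Zava19}), so your argument can only be assessed on its own terms. Your backward direction is correct. The genuine gap is in the forward direction, and you have in fact put your finger on it yourself: the factorization $(\varphi\times\varphi)^*(F)\subset D_X^T\circ(\psi\times\psi)_*(F)\circ D_X$ proves effective properness of $\varphi$ only if $(\psi\times\psi)_*(F)$ is controlled, i.e.\ only if $\psi$ is bornologous; and the ``symmetric'' factorization for bornologousness of $\varphi$ has middle factor $(\varphi\psi\varphi(x),\varphi\psi\varphi(x'))$, which lies in $(\varphi\times\varphi)_*(D_X\circ E\circ D_X^T)$ and is therefore controlled only if $\varphi$ is already known to be bornologous. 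The two circles do not ``interlock'' into a bootstrap: each of the four statements you need (bornologousness and effective properness of $\varphi$ and of $\psi$) takes one of the others as input, and none of them is available at the start.

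This is not a presentational defect but a genuine failure, because the paper's definition of a coarse equivalence asks only that $\varphi$ and $\psi$ be \emph{proper}, and under that reading the forward implication is false. Take $X=Y=\mathbb{Z}$ with the bounded coarse structure of the usual metric, and let $\varphi(n)=n$ for $n$ even and $\varphi(n)=-n$ for $n$ odd. Then $\varphi$ is a proper involution, hence coarsely equivalent in the paper's sense with $\psi=\varphi$ and $\psi\circ\varphi=\mathrm{id}_X$ exactly; but $|\varphi(2n)-\varphi(2n+1)|=4n+1$ is unbounded, so $\varphi$ is neither bornologous nor effectively proper. The statement is true only under the standard convention (as in \cite{Roe03} and \cite{Zava19}) that a coarse equivalence is a \emph{coarse map} (proper and bornologous) admitting a coarse-map inverse. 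Once bornologousness of $\varphi$ and $\psi$ is part of the hypothesis, your two factorizations go through immediately and no bootstrap is needed; the remainder of your argument --- coarse surjectivity from $D_Y$, and the whole backward direction, including $\{(\psi(\varphi(x)),x):x\in X\}\subset(\varphi\times\varphi)^*(F_0)$ and $\psi^{-1}(B)\subset F_0[\varphi(B)]$ --- is correct.
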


We give some examples of coarse space below.
\begin{example}{\rm (\cite[Ex.\ 2.5]{Roe03})}\label{ex:bcstr}
Let $(X,d)$ be a metric space. Let $\mathcal{E}^d := \{ E \subset E_r : \text{for some } r >0\}$, where $E_r := \{ (x,y) : d(x,y) \leq r\}$. The collection $\mathcal{E}^d$ is a coarse structure, which is called the bounded structure associated to the metric $d$.
\end{example}
\begin{example}{\rm (\cite{AD12})}
Let $G$ be a topological group, and let $\mathcal{E}_R^G := \{ E \subset E_C : C\text{ is compact.} \} $, where $E_C := \{ (x,y) : x^{-1}y \in C \} $. The collection $\mathcal{E}_R^G$ is a coarse structure, which is called a right $G$ compact structure.
\end{example}

The next example was introduced in \cite{BDM08}. 

Assume a topological group $G$ acts on a topological space $X$. For a subset $S$ in $X$, we write $G_S := \{ g\in G : gS \cap S \neq \emptyset \}$. The action is proper if $G_S$ is compact for any compact subset $S$. The action is cocompact if there exists a compact subset $S_0 \subset X$ such that $ X = GS_0 = \{ gs : g \in G,s \in S_0 \}$. 
\begin{example}{\rm(\cite{BDM08})}
Assume a locally compact Hausdorff group $G$ acts on a locally compact Hausdorff space $X$ properly and cocompactly. The collection $\mathcal{E}^X_G$ is defined by $\mathcal{E}^X_G := \{ E \subset E_{G,C} : \text{for some compact subset }C \subset X \}$, where $E_{G,C} := \bigcup_{g \in G} gC \times gC $. The collection $\mathcal{E}^X_G$ is a coarse structure, and the following map $f\colon G \to X$ is coarsely equivalent for a fixed $x_0 \in X$:
\[ f\colon (G,\mathcal{E}^G_R) \ni g \mapsto gx_0 \in (X, \mathcal{E}^X_G)\]
\end{example}

We review some facts in measure theory. Let $X$ be a topological space. Let $\mathcal{B}(X)$ denote the collection of all Borel sets, that is the minimum $\sigma$-algebra which contains all open sets.
Let $\mathcal{O}(X\times X)$ denote the collection of all open sets in $X \times X$ in the product topology, that is the one generated by sets $\{U\times V : U,V \text{ are open} \}$. Let $\mathcal{B}(X\times X)$ denote the minimum $\sigma$-algebra which contains all elements in $\mathcal{O}(X \times X)$.
Let $\mathcal{B}$ be a $\sigma$-algebra. Let $\mathcal{B} \times \mathcal{B}$ denote the minimum $\sigma$-algebra which contains all products of measurable sets. 

\begin{fact}\label{fact:mth}{\rm  (\cite[Lem.\ 5.1.2, Prop.\ 7.6.2, Prop.\ 4.5.1]{Co13}) }
\begin{enumerate}
\item Let $\mathcal{B}$ be a $\sigma$-algebra. If $E \in \mathcal{B}\times \mathcal{B}$, then $E_x \in \mathcal{B}$ for all $x \in X$.
\item If $X$ be a locally compact second-countable Hausdorff space, then 
\[\mathcal{B}(X) \times \mathcal{B}(X) = \mathcal{B}(X\times X).\]
\item Let a measure space $(X,\mu)$ be $\sigma$-finite. Then, we have the following isometry:
\[ \left(L^1(X,\mu)\right)^* \cong L^{\infty}(X,\mu) . \]
\end{enumerate}
\end{fact}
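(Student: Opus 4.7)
The plan is to prove the three parts separately using three standard measure-theoretic techniques.

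For (1), I would apply the good-sets principle. Let $\mathcal{A} := \{E \in \mathcal{B}\times\mathcal{B} : E_x \in \mathcal{B} \text{ for every } x \in X\}$. Any measurable rectangle $A \times B$ lies in $\mathcal{A}$ since $(A\times B)_x$ is either $B$ or $\emptyset$ depending on whether $x\in A$. Moreover $\mathcal{A}$ is closed under complements via $(E^c)_x = (E_x)^c$ and under countable unions via $(\bigcup_n E_n)_x = \bigcup_n (E_n)_x$. Hence $\mathcal{A}$ is a $\sigma$-algebra containing the generators of $\mathcal{B}\times\mathcal{B}$, so the two coincide.

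For (2), the inclusion $\mathcal{B}(X)\times\mathcal{B}(X) \subset \mathcal{B}(X\times X)$ is immediate because the continuous projections pull Borel sets back to Borel sets, so each measurable rectangle $A\times B = \pi_1^{-1}(A)\cap \pi_2^{-1}(B)$ lies in $\mathcal{B}(X\times X)$. For the converse I would invoke second countability: fix a countable base $\{U_n\}$ for $X$, so that $\{U_m\times U_n\}$ is a countable base for $X\times X$. Then every open subset of $X\times X$ is a countable union of basic rectangles, each of which lies in $\mathcal{B}(X)\times\mathcal{B}(X)$; since $\mathcal{B}(X\times X)$ is generated by the open sets, equality follows.

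For (3), define $T\colon L^\infty(X,\mu) \to (L^1(X,\mu))^*$ by $T(g)(f) := \int fg\,d\mu$. H\"older's inequality yields $\|T(g)\|\le\|g\|_\infty$. For the reverse inequality, given $c<\|g\|_\infty$ use $\sigma$-finiteness to select a measurable $B\subset\{|g|>c\}$ with $0<\mu(B)<\infty$ and test $T(g)$ against $f := \overline{\operatorname{sgn}(g)}\,1_B/\mu(B)$, for which $\|f\|_1=1$ and $T(g)(f)\ge c$; hence $T$ is an isometric embedding. The delicate part is surjectivity, which I would obtain via the Radon--Nikodym theorem. If first $\mu(X)<\infty$, each $\phi\in (L^1)^*$ produces a complex measure $\nu(E):=\phi(1_E)$ satisfying $|\nu(E)|\le\|\phi\|\,\mu(E)$, in particular $\nu\ll\mu$; setting $g:=d\nu/d\mu$ yields $\|g\|_\infty\le\|\phi\|$, and the identity $\phi(f)=\int fg\,d\mu$ is verified on simple functions and extended by $L^1$-density. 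For the general $\sigma$-finite case I would decompose $X=\bigsqcup_n X_n$ with $\mu(X_n)<\infty$, apply the finite-measure case on each piece to produce $g_n$, and glue them into $g:=\sum_n g_n 1_{X_n}$; the uniform bound $\|g_n\|_\infty\le\|\phi\|$ forces $g\in L^\infty$ with $\|g\|_\infty\le\|\phi\|$. The main obstacle is the Radon--Nikodym step together with controlling the essential supremum when assembling the $g_n$ in the $\sigma$-finite reduction; everything else is a routine verification.
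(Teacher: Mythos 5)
The paper offers no proof of this Fact at all --- it simply cites Cohn's textbook (Lem.\ 5.1.2, Prop.\ 7.6.2, Prop.\ 4.5.1) --- and your three arguments are precisely the standard proofs given there: the good-sets principle for measurability of sections, second countability to write every open set of $X\times X$ as a countable union of basic rectangles (note local compactness and the Hausdorff property are not even needed for this part), and the Radon--Nikodym theorem plus a finite-to-$\sigma$-finite patching argument for the duality $\left(L^1(X,\mu)\right)^*\cong L^{\infty}(X,\mu)$. Your proposal is correct and takes essentially the same approach as the cited source.
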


\section{Uniform measure}
\subsection{Measurablity of controlled sets}
In this subsection, we review the measurability of controlled sets defined in \cite{Wi21}, and define a similar but different measurability.  

Let $X$ be a set, $\mathcal{B}$ be a $\sigma$-algebra on $X$, and $\mathcal{E}$ be a coarse structure on $X$.

\begin{definition}\label{def:mblecstrW}{\rm(\cite{Wi21})}
A controlled set $E$ is measurable if $E[U]$ is a measurable set for all measurable sets $U$ in $X$.
\end{definition}

In contrast, we consider the following.
\begin{definition}\label{def:mblecstr}
\begin{enumerate}
\item A subset $E$ in $X\times X$ is a measurable controlled set if $E \in \mathcal{E}\cap(\mathcal{B\times B})$.
\item The coarse structure $\mathcal{E}$ is measurable if for all $E \in \mathcal{E}$ there exists $\tilde{E} \in \mathcal{E}\cap (\mathcal{B}\times \mathcal{B})$ such that it contains $E$.

\end{enumerate}
\end{definition}

We explain the difference between the two notions of measurability.
\begin{proposition}\label{prop:diff}
\begin{enumerate}
\item There exists a coarse space $X$ with a $\sigma$-algebra $\mathcal{B}$ and a controlled set $E$ such that it is measurable in the sense of Definition~\ref{def:mblecstrW} but not measurable in the sense of Definition~\ref{def:mblecstr}.
\item Consider the coarse structure of $\mathbb{R}$ induced by the Euclidean metric, and $\sigma$-algebra $\mathcal{B}(\mathbb{R})$. There exists a controlled set $E^{\prime}$ in $\mathbb{R\times R}$ such that it is measurable in the sense of Definition~\ref{def:mblecstr} but not measurable in the sense of Definition~\ref{def:mblecstrW}.
\end{enumerate}
\end{proposition}

We need the following facts for the proof.

\begin{fact}{\rm(\cite[Chap.\ 5]{Co13})}\label{fact:diag}
If the cardinality of $X$ is greater than that of the continuum, then the diagonal set $\Delta$ in $X\times X$ is not Borel measurable.
\end{fact}
\begin{fact}{\rm(\cite{S1917})}\label{fact:analynotmble}
There exists a Borel measurable set $A$ in $\mathbb{R}^2$ such that its projection onto $\mathbb{R}$ is not Borel measurable.
\end{fact}
{\itshape Proof of Proposition~\ref{prop:diff}.} {\itshape (1)} 
Let $X$ be a coarse space whose cardinality is larger than the continuum.
By Fact~\ref{fact:diag}, the diagonal set $\Delta$ is not measurable in the sense of Definition~\ref{def:mblecstr}, but it is measurable in the sense of Definition~\ref{def:mblecstrW} since $\Delta[U]=U$.

{\itshape (2)} 
Define $E^{\prime} \coloneqq A \cap ([0,1]\times[0,1])$. It is controlled and measurable in the sense of Definition~\ref{def:mblecstr}, but not measurable in the sense of Definition~\ref{def:mblecstrW} since $E^{\prime}[\mathbb{R}] = \mathrm{Proj}(E^{\prime})$ is not Borel measurable in $\mathbb{R}$ by Fact~\ref{fact:analynotmble}.
\qed

\vskip\baselineskip

An advantage of Definition~\ref{def:mblecstrW} is that the composition of measurable controlled sets is again measurable. An analogue statement does not always hold if we adopt Definition~\ref{def:mblecstr}. 
On the other hand, an advantage of Definition~\ref{def:mblecstr} is that it is compatible to the notion of measurability of product spaces, and that the fiber of singleton $E_x$ is automatically measurable by Fact~\ref{fact:mth}(1).  An analogue statement does not hold if we adopt Definition~\ref{def:mblecstrW}.

We adopt the measurability of Definition~\ref{def:mblecstr} in this paper because we consider $E$-neighborhoods of a point in many cases, and we can overcome its disadvantage by considering a measurable controlled set containing the composition of controlled sets. 

\subsection{Uniform measure}
We review the notions of measure uniformity defined in \cite{Wi21}.
 \begin{definition}{\rm(\cite{Wi21})}\label{def:um}
 A measure $\mu$ on $(X,\mathcal{B})$ is uniform with respect to the measurable coarse space $\mathcal{E}$ if the following evaluation 
 \[ \sup_{x\in X} \mu(E_x) < \infty\]
 holds for any measurable controlled set $E \in \mathcal{E} \cap (\mathcal{B}\times \mathcal{B})$. 
 \end{definition}

\subsection{Examples of measurable coarse structures and uniform measures}

We give some examples of measurable coarse structures and uniform measures.
\begin{example}
Let $(X,d)$ be a separable metric space. Then $\mathcal{E}^d$ is measurable. Indeed, a set $E_r$ is closed in $X \times X$ for all $r>0$, thus $E_r \in \mathcal{B}(X\times X) = \mathcal{B}(X)\times \mathcal{B}(X)$ by Fact~\ref{fact:mth}(2).

A measure $\mu$ on $(X,\mathcal{B}(X))$ is uniform with respect to $\mathcal{E}^d$ if and only if
\[ \sup_{x\in X} \mu \left( \overline{B_r}(x)\right) < \infty\]
holds for all $r>0$, where the closed ball with radius $r$ is denoted by $\overline{B_r}(x)$.
\end{example}

\begin{example}
Let $G$ be a locally compact second-countable Hausdorff group. Then, the coarse structure $\mathcal{E}^G_R$ is measurable. Indeed, a set $E_C$ is closed in $G\times G$ for all compact subsets $C$ in $G$. If $\mu$ is the left Haar measure, then $\mu$ is uniform. Indeed, we have 
\[ \mu \left( (E_C)_x\right) = \mu (xC) = \mu(C) < \infty \]
for all $x \in G$ and all compact subsets $C$ (note that the left Haar measure $\mu$ is regular).
\end{example}

\begin{example}
Let $G$ be a locally compact group and $X$ be a locally compact second-countable Hausdorff space, and suppose that $G$ acts on $X$ properly and cocompactly. Then, we have
\begin{enumerate}
\item The coarse structure $\mathcal{E}^X_G$ is measurable.
\item If a measure $\mu$ on $X$ is $G$ invariant and regular, then $\mu$ is uniform with respect to $\mathcal{E}^X_G$.
\end{enumerate}
\end{example}
\begin{proof}
(1) We prove that $G$ is a Lindel\"of space. Indeed, we have a increasing sequence of compact subsets $K_1 \subset K_2 \subset \cdots$ such that $X = \bigcup_{i=1}^{\infty} K_i$, since $X$ is locally compact and second-countable, in particular, it is $\sigma$-compact. We can check $G = \bigcup_{i=1}^{\infty} G_{K_i}$, and each $G_{K_i}$ is compact since the $G$-action is proper. Therefore, $G$ is $\sigma$-compact, in particular, Lindel\"of.

Fix a relatively compact open neighborhood $U_0$ of the unit in $G$. Since $G = \bigcup_{g \in G}g U_0$, there exists a countable subset $\{ g_i\}_{i=1}^{\infty}$ such that $G = \bigcup_{i=1}^{\infty} g_i U_0$.

For any compact subset $C$ in $G$, we have
\begin{eqnarray*}
E_{G,C} &=& \bigcup_{g\in G} gC \times gC \\
&\subset & \bigcup_{i=1}^{\infty} g_iU_0 C \times g_iU_0 C \quad \text{(this term is denoted by } \tilde{E_{G,C}} ) \\
&\subset& \bigcup_{g \in G} g\bar{U_0}C \times g\bar{U_0}C = E_{G,\bar{U_0}C}
\end{eqnarray*}
The set $\tilde{E_{G,C}}$ is contained in $\mathcal{B}(X) \times \mathcal{B}(X)$ and is a controlled set since $\tilde{E_{G,C}} \subset E_{G,\bar{U_0}C} \in \mathcal{E}^X_G$. Therefore, $\mathcal{E}^X_G$ is measurable.

(2) For any compact subset $C \subset X$ satisfying $X = G C$ and $x \in X$, there exists $g_x \in G$ such that $x \in g_x C$. If $y \in (E_{G,C})_x$, i.e., $(x,y) \in E_{G,C}$, then there exists $g \in G$ such that $x,y \in gC$. Since $gC \cap g_x C \neq \emptyset$, we have $g_x^{-1}g \in G_C$. Thus, we have
\[y \in gC \subset g_x (g_x^{-1} g)C \subset g_x G_C C .\]
Hence, we have $(E_{G,C})_x \subset g_x G_C C$.  By the definition of $\mathcal{E}^X_G$, for any $E \in \mathcal{E}^X_G$, there exists a compact set $C \subset X$ such that $E \subset E_{G,C}$. Therefore, we have 
\[ \mu(E_x) \leq \mu(g_x G_C C) = \mu(G_C C) < \infty\]
for all $x \in X$. The second equality follows since $\mu$ is $G$-invariant, and the last inequality follows by the regularity of $\mu$.
\qed
\end{proof}

\section{An analogue of Ponzi scheme for coarse spaces with uniform measures}

\subsection{Definition of $\mu$-PS and Ponzi scheme}

In this subsection, we review the definition of  Ponzi scheme, and we define $\mu$-PS, which is the main object of this paper. 
\begin{definition}{\rm (\label{def:PS}\cite[Def.\ 3.34]{Roe03})}
A uniform 1-chain $\theta$ is a Ponzi scheme if its boundary $\partial \theta$ is effective.
\end{definition}

\begin{definition}\label{def:muPS}
A $\mu$-1 chain $c$ is a $\mu$-PS if its boundary $\partial c$ is effective.
\end{definition}

\subsection{Review of the notions in the definition of Ponzi scheme}
We recall the notions used in Definition~\ref{def:PS} as below. Let $(X,\mathcal{E})$ be a coarse space. 
\begin{definition}{\rm (\cite[Def.\ 3.25]{Roe03})}
A subset $S$ in $X$ is uniform locally finite if 
\[ \sup_{x \in X} \# (E_x \cap S) < \infty \] 
holds for all controlled set $E \in \mathcal{E}$.
\end{definition}

\begin{definition}{\rm (\cite[Def.\ 3.27 - Def.\ 3.32]{Roe03})} 
\begin{enumerate}
\item A function $\phi\colon X \to \bbbr$ is a uniform 0-chain if it is a bounded function and there exists a uniform locally finite subset $S$ in $X$ such that $\supp \phi$ is contained in it. The collection of uniform 0-chains is denoted by $C^u_0(X)$. A uniform 0-chain $\phi$ is effective if $\phi \geq 0$, and there exists a controlled set $E$ such that the following evaluation
\[ \sum_{s\in E_x \cap S} \phi(s) \geq 1 \]
holds for all $x \in X$.
\item A function $\theta\colon X \times X \to \bbbr$ is a uniform 1-chain if it is a bounded function and there exists a uniform locally finite subset $S$ and a controlled set $E_{\theta} \in \mathcal{E}$ such that $\supp \theta \subset (S \times S) \cap E_{\theta}$. The collection of uniform 1-chains is denoted by $C^u_1(X)$.
\end{enumerate}
\end{definition}

Let us define the boundary map for a uniform 1-chain by
\[ \partial \theta (s) := \sum_{s^{\prime} \in S} \theta(s^{\prime},s ) - \sum_{s^{\prime} \in S} \theta (s,s^{\prime}) .\]
We can define a Ponzi scheme by the following fact:
\begin{fact}{\rm (\cite[Chap.\ 3]{Roe03})}
The boundary map $\partial$ is a well-defined map from $C^u_1 (X)$ to $C^u_0 (X)$.
\end{fact}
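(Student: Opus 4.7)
The plan is to verify the three conditions encoded in the definition of $C^u_0(X)$ for $\partial\theta$: (i) each of the two sums defining $\partial\theta(s)$ is a finite sum at every $s$, (ii) $\partial\theta$ is a bounded function on $X$, and (iii) $\supp\partial\theta$ lies in a uniform locally finite subset.

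First I would fix $\theta \in C^u_1(X)$ together with witnesses $S$ (uniform locally finite) and $E_\theta \in \mathcal{E}$ such that $\supp\theta \subset (S\times S) \cap E_\theta$, and set $M := \sup_{(x,y) \in X \times X}|\theta(x,y)| < \infty$. The crucial observation for (i) is that a term $\theta(s^\prime,s)$ can be nonzero only if $(s^\prime,s) \in E_\theta \cap (S\times S)$, i.e.\ $s^\prime \in (E_\theta^T)_s \cap S$; similarly $\theta(s,s^\prime)$ can be nonzero only if $s^\prime \in (E_\theta)_s \cap S$. Because the coarse structure $\mathcal{E}$ is closed under transposition, both $E_\theta$ and $E_\theta^T$ are controlled sets, so applying uniform local finiteness of $S$ to each yields a common bound
\[ N := \max\Bigl(\sup_{x\in X}\#\bigl((E_\theta)_x \cap S\bigr),\ \sup_{x\in X}\#\bigl((E_\theta^T)_x \cap S\bigr)\Bigr) < \infty. \]
Each of the two sums therefore has at most $N$ nonzero terms, which gives convergence, and the triangle inequality immediately yields $|\partial\theta(s)| \leq 2MN$ for every $s \in X$, settling (ii).

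For (iii), I would note that if $s \notin S$, then for every $s^\prime \in X$ neither $(s^\prime,s)$ nor $(s,s^\prime)$ lies in $S\times S$, so both $\theta(s^\prime,s)$ and $\theta(s,s^\prime)$ vanish, whence $\partial\theta(s)=0$. Thus $\supp\partial\theta \subset S$, which is uniform locally finite by hypothesis. Combining (i)--(iii) gives $\partial\theta \in C^u_0(X)$.

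The only real obstacle is organizational bookkeeping: one must invoke closure of $\mathcal{E}$ under transposition so that uniform local finiteness of $S$ can be applied to both $E_\theta$ and $E_\theta^T$. Once the finiteness of each sum is in hand, boundedness and the support condition follow with no further analytic work.
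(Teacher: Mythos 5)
Your argument is correct and is the standard verification: the paper itself states this as a Fact cited from \cite[Chap.~3]{Roe03} without reproducing a proof, and your three steps (finitely many nonzero terms in each sum via uniform local finiteness of $S$ applied to the controlled sets $E_\theta$ and $E_\theta^T$, the resulting bound $|\partial\theta(s)|\leq 2MN$, and $\supp\partial\theta\subset S$) are exactly the expected argument. No gaps.
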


\subsection{The notions in the definition of $\mu$-PS}

We define the notions in Definition~\ref{def:muPS}. Let $(X,\mathcal{B})$ be a measurable space, and $\mathcal{E}$ be a measurable coarse structure, $\mu$ be a uniform measure on $X$ with respect to $\mathcal{E}$. Moreover, we assume the measure space $(X,\mu)$ is $\sigma$-finite.

\begin{definition}
\begin{enumerate}
\item A function $f\colon X \to \bbbr$ is a $\mu$-0 chain if it is an essentially bounded with respect to $\mu$, i.e., $f \in L^{\infty}(X,\mu)$. The collection of $\mu$-0 chains is denoted by $C_0(X,\mu)$. A $\mu$-0 chain $f$ is effective if $f \geq 0 $ a.e. $\mu$ and there exists $E \in \mathcal{E} \cap (\mathcal{B}\times \mathcal{B})$ such that $\int_{E_{x_0}} f d\mu \geq 1$ for all $x_0 \in X$.
\item A function $c\colon X \times X \to \bbbr$ is a $\mu$-1 chain if it is essentially bounded with respect to $\mu \otimes \mu$ and there exists a measurable controlled set $E_c \in \mathcal{E}\cap (\mathcal{B}\times \mathcal{B})$ such that $\int_{X\times X \setminus E_c}|c| d \mu \otimes \mu = 0$. Just as $\mu$-0 chain  case, the collection of $\mu$-1 chains is denoted by $C_1(X,\mu)$.
\end{enumerate}
\end{definition}

In order to define a $\mu$-PS, we consider the boundary map from $C_1(X,\mu)$ to $C_0(X,\mu)$.
\begin{proposition}
The boundary map $\partial\colon C_1(X,\mu) \to C_0(X,\mu)$ defined as follows is well-defined :
\[ \partial c (x) = \int_X c(y,x) d\mu(y) - \int_X c(x,y) d\mu(y)  \]
for $c \in C_1(X,\mu)$.
\end{proposition}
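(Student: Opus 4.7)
The plan is to check three things about the map $c \mapsto \partial c$: that the two defining integrals converge pointwise (with a uniform bound), that the resulting function is measurable, and that the definition descends to equivalence classes in $L^\infty(\mu \otimes \mu)$. The idea is to reduce $c$ to a well-behaved pointwise representative supported in a symmetric measurable controlled set, then apply the uniformity of $\mu$ slice by slice.

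First I would choose a $\mathcal{B} \times \mathcal{B}$-measurable representative of $c$ and, using that $\mathcal{E}$ is closed under transposition and finite union and that $E_c^T \in \mathcal{B} \times \mathcal{B}$, replace $E_c$ by the symmetric set $E_c \cup E_c^T \in \mathcal{E} \cap \mathcal{B} \times \mathcal{B}$. Since $\int_{X \times X \setminus E_c} |c|\, d\mu \otimes \mu = 0$, replacing $c$ by $c \cdot \mathbf{1}_{E_c}$ alters it only on a $\mu \otimes \mu$-null set, so I may assume that $|c| \leq M := \|c\|_\infty$ everywhere and that $c$ vanishes outside $E_c$. By Fact~\ref{fact:mth}(1), each slice $(E_c)_x$ is in $\mathcal{B}$, and by the uniformity of $\mu$ we set $C := \sup_{x \in X} \mu((E_c)_x) < \infty$.

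With these reductions, for every $x \in X$, the slice $y \mapsto c(y,x)$ vanishes outside $(E_c^T)_x = (E_c)_x$ and is bounded by $M$, so
\[ \int_X |c(y,x)|\, d\mu(y) \leq M\mu((E_c)_x) \leq MC, \]
and symmetrically $\int_X |c(x,y)|\, d\mu(y) \leq MC$. Hence both integrals converge absolutely and $|\partial c(x)| \leq 2MC$ for every $x$. Tonelli's theorem, applied to the nonnegative $\mathcal{B} \times \mathcal{B}$-measurable function $|c|$ on the $\sigma$-finite product space, guarantees that $x \mapsto \int_X c(y,x)\, d\mu(y)$ and $x \mapsto \int_X c(x,y)\, d\mu(y)$ are $\mathcal{B}$-measurable, so $\partial c \in L^\infty(X,\mu) = C_0(X,\mu)$. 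Independence of the chosen representative follows from Fubini: if $c_1 = c_2$ $\mu \otimes \mu$-a.e., then for $\mu$-a.e.\ $x$ the slices agree $\mu$-a.e., so the integrals coincide. The main obstacle is the bookkeeping needed to pass from a $\mu \otimes \mu$-essentially bounded function to a pointwise-bounded representative supported on a fixed symmetric controlled set; once this is set up, the rest is a routine application of Tonelli–Fubini combined with the uniformity of $\mu$.
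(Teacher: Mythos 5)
Your argument is correct and follows essentially the same route as the paper: both bound $\lvert\partial c(x)\rvert$ by $\lVert c\rVert_{\infty}\bigl(\mu((E_c)_x)+\mu((E_c^T)_x)\bigr)$ and invoke the uniformity of $\mu$ on the measurable controlled sets $E_c$ and $E_c^T$, using Tonelli to reduce the null-set hypothesis on $X\times X\setminus E_c$ to statements about slices. Your version is somewhat more meticulous (choosing a pointwise representative supported in a symmetrized controlled set, and checking measurability and independence of the representative, which the paper leaves implicit), but the core estimate is identical.
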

\begin{proof}
We have 
\[ \int_{X \setminus (E_c^T)_x} |c(y,x)| d\mu(y) = \int _{X\setminus (E_c)_x} |c(x,y)| d\mu(y) = 0\]
for $x \in X$ a.e. $\mu$ by the assumption of $\mu$-1 chain. 

Thus, for $x \in X$ a.e. $\mu$, we have
\begin{eqnarray*}
|\partial c (x)| 
&\leq & \int_{(E_c^T)_x} |c(y,x)| d\mu(y) + \int_{(E_c)_x} |c(x,y)| d\mu(y) \\
&\leq & ||c||_{L^{\infty}(X\times X)} \mu \left( (E_c^T)_x \right) + ||c||_{L^{\infty}(X\times X)} \mu \left( (E_c)_x \right) < \infty
\end{eqnarray*}
uniformly as $x \in X$. Therefore, $\partial c \in C_0(X,\mu)$.
\qed
\end{proof}

\subsection{Some remarks on Ponzi schemes and $\mu$-PS}
We do some remarks about the similarities and differences between Ponzi schemes and $\mu$-PSs.
\begin{remark}
We can consider a Ponzi scheme with a uniform locally finite support $S \times S$ as a $\#|_S$-PS, where 
\[ \#|_S (A) := \# (A \cap S)\]
for $A \subset X$.
\end{remark}

\begin{remark}
In order to define $\mu$-PSs, we need to assume that the measure space $(X,\mu)$ is $\sigma$-finite, for it confirms several properties, for example, the uniqueness of the product measure $\mu \otimes \mu$. However, the countability of support of Ponzi schemes is not assumed in its definition (see Definition~\ref{def:PS}).
\end{remark}

\subsection{An example of $\mu$-PS}
In this subsection, we give an example of $\mu$-PS.
\begin{example}\label{ex:muPS}
    Consider the Poincare disk model of the hyperbolic plane, that is $D^2 := \{ (x,y) \in \bbbr^2 : x^2 + y^2 < 1 \}$ with the Poincare metric $ds^2 = \frac{dx^2+dy^2}{(1-x^2-y^2)^2}$, and define the measure $\mu$ on $D^2$ by $\mu:= \frac{4dxdy}{(1-x^2-y^2)^2}$. Let $d$ denote the induced metric. Then, the measurable coarse space $(D^2,\mathcal{E}^{d})$ has a $\mu$-PS.
\end{example}
For the proof of this statement, we review some facts in hyperbolic geometry.  We refers \cite{And05} and \cite{Rat19}.

\begin{fact}{\rm (\cite[Thm.\ 3.5.3]{Rat19} \cite[Thm.\ 5.16, Exer.\ 4.2]{And05})}\label{fact:hyp}
\begin{enumerate}
\item (The First Law of Cosines) If $\alpha, \beta, \gamma$  are the angles of hyperbolic triangle and $a, b, c$ are the length of the opposite sides, then
\[ \cos\gamma = \frac{\cosh a \cosh b - \cosh c}{\sinh a \sinh b}.\]
\item If $\alpha, \beta, \gamma$ are the angles of hyperbolic triangle $\triangle$, then
\[ \mu(\triangle) = \pi -(\alpha + \beta + \gamma).\]
\item For any $\theta \in [0,2\pi)$ and  $r\geq 0$, we  have
\[ d(0,re^{i\theta})= \log \left( \frac{1+r}{1-r}\right).\]
\end{enumerate}
\end{fact}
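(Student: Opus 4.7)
The strategy is to exploit the exponential growth of hyperbolic area: tessellate $D^2$ into cells of bounded diameter and roughly equal area arranged in a rooted tree in which every node has at least two children, and then define $c$ to send a uniform amount of mass from each cell back to its parent. Each non-root cell will then receive mass from at least two children while losing to only one parent, and this will yield a uniform positive lower bound on $\partial c$.

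First I would construct the tessellation. By Fact~\ref{fact:hyp}(iii), the hyperbolic annulus $A_n := \{z \in D^2 : n \leq d(0,z) \leq n+1\}$ corresponds to the Euclidean annulus $\tanh(n/2) \leq r \leq \tanh((n+1)/2)$ and has hyperbolic area $\mu(A_n) = 2\pi(\cosh(n+1) - \cosh n)$. Fix a small $V > 0$, set $k_n := \lceil \mu(A_n)/V \rceil$, and partition $A_n$ into $k_n$ angular sectors $C_{n,j}$ of equal hyperbolic area (each lying in $[V/2, V]$). The outer angular arc length of each $C_{n,j}$ is $2\pi\sinh(n+1)/k_n$, which is bounded by a constant depending only on $V$, so each cell has hyperbolic diameter at most some $D_0$ (uniformly in $n,j$). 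Since $\mu(A_{n+1})/\mu(A_n) \to e > 2$, and a direct check gives this ratio strictly greater than $2$ for every $n \geq 0$, we may choose $V$ small enough that $k_{n+1}/k_n \geq 2 + \delta$ for a uniform $\delta > 0$. Grouping the sectors of $A_{n+1}$ into $k_n$ consecutive batches then assigns each $C_{n,j}$ at least two ``children'' in $A_{n+1}$, turning the cells into a rooted tree (rooted at the innermost cell $C_{0,1}$) with branching $\geq 2$ everywhere.

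Now define
\[
c(x,y) := \lambda \cdot \mathbf{1}\{\text{the cell of } x \text{ is a child of the cell of } y\},
\]
for a constant $\lambda > 0$ to be chosen. Then $|c| \leq \lambda$ and $\supp c$ lies in $\{(x,y) : d(x,y) \leq 2 D_0 + 1\}$, a closed (hence measurable) controlled set of $\mathcal{E}^d$, so $c \in C_1(D^2, \mu)$. For $y$ in a cell $C_{n,j}$,
\[
\partial c(y) \;=\; \lambda \sum_{C'\, \text{child of}\, C_{n,j}} \mu(C') \;-\; \lambda\, \mu(C_{n-1, p(j)}) \;\geq\; \lambda \delta' V
\]
for some uniform $\delta' > 0$ (with the parent term absent when $n=0$, making the bound even larger there). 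Taking $E := \{(x,y) : d(x,y) \leq 1\}$, which is closed and controlled, and noting $\mu(E_{x_0}) = 2\pi(\cosh 1 - 1)$ for every $x_0 \in D^2$, we obtain $\int_{E_{x_0}} \partial c\, d\mu \geq \lambda \delta' V \cdot 2\pi(\cosh 1 - 1)$, which is $\geq 1$ for $\lambda$ chosen sufficiently large. Hence $\partial c$ is effective and $c$ is a $\mu$-PS.

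The main obstacle is the uniform strict inequality $\sum_{\text{children}}\mu(C') \geq (1 + \delta')\mu(\text{parent})$: since cell areas only lie in $[V/2, V]$ rather than being exactly $V$, simply having two children per cell is not enough. This is handled by taking $V$ small enough that $k_{n+1}/k_n$ is close to $e$ (well above $2$), controlling the error from the ceiling in $k_n$, and verifying the small-$n$ base cases directly using Fact~\ref{fact:hyp}. Fact~\ref{fact:hyp}(i) would enter if one preferred to bound cell diameters via a hyperbolic-triangle comparison rather than the direct sector estimate, and Fact~\ref{fact:hyp}(ii) would enter in an alternative construction based on a triangle-group tessellation.
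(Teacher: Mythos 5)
First, a mismatch worth flagging: Fact~\ref{fact:hyp} is never proved in the paper (it is quoted from \cite{Rat19} and \cite{And05}), and your proposal does not prove it either --- you invoke part (iii) as an input. What you actually prove is Example~\ref{ex:muPS}, the existence of a $\mu$-PS on $(D^2,\mathcal{E}^d)$, which is exactly what the paper establishes in the proof that follows this fact; I therefore compare your argument with that proof.

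Read this way, your proposal is correct in outline but follows a genuinely different route. The paper takes the explicit radial flow $c(z,z')=1$ precisely when $d(z,z')\le 1$ and $d(z,0)\ge d(z',0)$, so that $\partial c(z)=\mu\left(\overline{B_1}(z)\right)-2\mu(D(z))$, and proves uniform positivity by hyperbolic trigonometry: for small $d(0,z)$ via monotonicity of $\cosh$, and otherwise by decomposing $D(z)$ into two sectors minus a tetragon, using parts (i) and (ii) of Fact~\ref{fact:hyp}, with the final monotonicity and limiting value checked by computer. You instead run a combinatorial mass transport: equal-area angular cells in the unit-width annuli $A_n$, organized into a forest with branching at least two, each cell feeding its parent. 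Your key inequality does hold uniformly once the accounting is done: writing $x_n=k_{n+1}/k_n$, each parent receives at least $\lfloor x_n\rfloor$ children of area $\mu(A_{n+1})/k_{n+1}$ each, so the children-to-parent area ratio is at least $\left(\lfloor x_n\rfloor/x_n\right)\cdot\mu(A_{n+1})/\mu(A_n)\ge \left(2/3\right)e>1$, because $\mu(A_{n+1})/\mu(A_n)=\sinh\left(n+\tfrac{3}{2}\right)/\sinh\left(n+\tfrac{1}{2}\right)>e$ for all $n\ge0$ and $x_n\ge 2$ once $V$ is small. Two details should be made explicit: the ``consecutive batches'' must respect angular order (assign each child to the parent whose angular interval contains its initial angle), as this is what keeps parent--child pairs within uniformly bounded hyperbolic distance and hence keeps $\supp c$ inside a closed, measurable controlled set; and $A_0$ consists of several root cells, so you get a forest rather than a tree, which is harmless since roots only gain mass. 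As for what each approach buys: the paper's chain is a one-line formula, but its verification is tied to the specific trigonometry of $D^2$ and leans on computer-assisted checks; your argument uses only the ball-area computation and Fact~\ref{fact:hyp}(iii) --- parts (i) and (ii) never enter --- avoids computer verification entirely, and generalizes to any measured coarse space with uniform exponential volume growth admitting a bounded-diameter, roughly equal-area tessellation, which is closer in spirit to the tree-like picture of nonamenability going back to Block--Weinberger.
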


\noindent {\itshape Proof of Example~\ref{ex:muPS}.} Define the function $c\colon D^2\times D^2 \to \bbbr$ by
\begin{equation*}
 c(z,z^{\prime}):= \left\{ \begin{array}{lll}
    1 &\quad &( d(z,z^{\prime})\leq 1 \ \& \ d(z,0)\geq d(z^{\prime},0))\\
    0 &\quad &(\text{otherwise}) .\\
    \end{array} \right.
\end{equation*}
The function $c$ is bounded, in particular essentially bounded $a.e.~\mu$, and the support of $c$ is contained in $E_1$ (see Example~\ref{ex:bcstr}), thus $c$ is a $\mu$-1 chain. We prove that there exists $\varepsilon > 0$ such that 
\[ \partial c (z) \geq \varepsilon \]
holds for all $z \in D^2$. Then, by multiplying a positive scalar, the $\mu$-1 chain $c$ is a $\mu$-PS, since all closed ball in $D^2$ with a same positive radius has the same positive volume. Define $D(z) := \{ z^{\prime} \in D^2 : d(z,z^{\prime}) \leq 1 , d(0,z)>d(0,z^{\prime} )\} $ for $z \in D^2$. We have  
\begin{align*}
\partial c(z) &= \int_{D^2}c(z^{\prime},z)d\mu(z^{\prime}) - \int_{D^2} c(z,z^{\prime}) d\mu(z^{\prime}) \\
&= \int_{\overline{B_1}(z)\setminus D(z)} d\mu(z^{\prime}) - \int_{D(z)} d\mu(z^{\prime})\\
&= \mu\left( \overline{B_1}(z) \right) - 2 \mu(D(z))  .
\end{align*}
The volume of the closed ball $\overline{B_1}(z)$ can be calculated as follows:
\begin{align*}
\mu\left( \overline{B_1}(z)\right) &= \mu\left( \overline{B_1}(0) \right) \\
&= \iint_{\overline{B_1}(0)} \frac{4dxdy}{(1-x^2-y^2)^2} \\
&= \int_{0}^{2\pi}\int_0^{1/2} \frac{4}{(1-\tanh^2{t})^2} \frac{\tanh{t}}{\cosh^2{t}}dt d\theta \\
&= 2\pi (\cosh{1}-1) .\tag{*}
\end{align*}
We changed the variable $(x,y)$ to $(t,\theta)$ in the third equality by
\[ x = \tanh{t}\cos\theta, y = \tanh{t}\sin\theta.\]
We write $z = \tanh{\left( \frac{r}{2}\right)} e^{i\theta}$. By Fact~\ref{fact:hyp}(3), 
\[ d(0,z) = \log \left( \frac{1-\tanh{r/2}}{1+\tanh{r/2}}\right) = r .\]
In order to calculate the volume of the area $D(z)$, we consider the following cases,
{\itshape Case~1} : $0<r\leq\frac{1}{2}$, and {\itshape Case~2} : $ \frac{1}{2}<r$.

\noindent{\itshape Case~1.} In this case, we have $D(z) = \overline{B_r}(0)$, since the closed ball $\overline{B_r}(0)$ is contained in $\overline{B_1}(z)$. In the same fashion as (*), we have $\mu\left( D(z)\right) = 2\pi(\cosh{r}-1)$. Hence, we have 
\begin{align*}
\partial c(z) &= 2\pi (\cosh{1}-1) -4\pi (\cosh{r} - 1) \\
&\geq 2\pi (\cosh{1} - 1) -4\pi\left(\cosh{\frac{1}{2}} -1\right) > 0 .
\end{align*}
The second inequality holds since the function as $r$ is monotonically increasing.
\begin{figure}[htbp]
\begin{center}
\includegraphics[width=25mm]{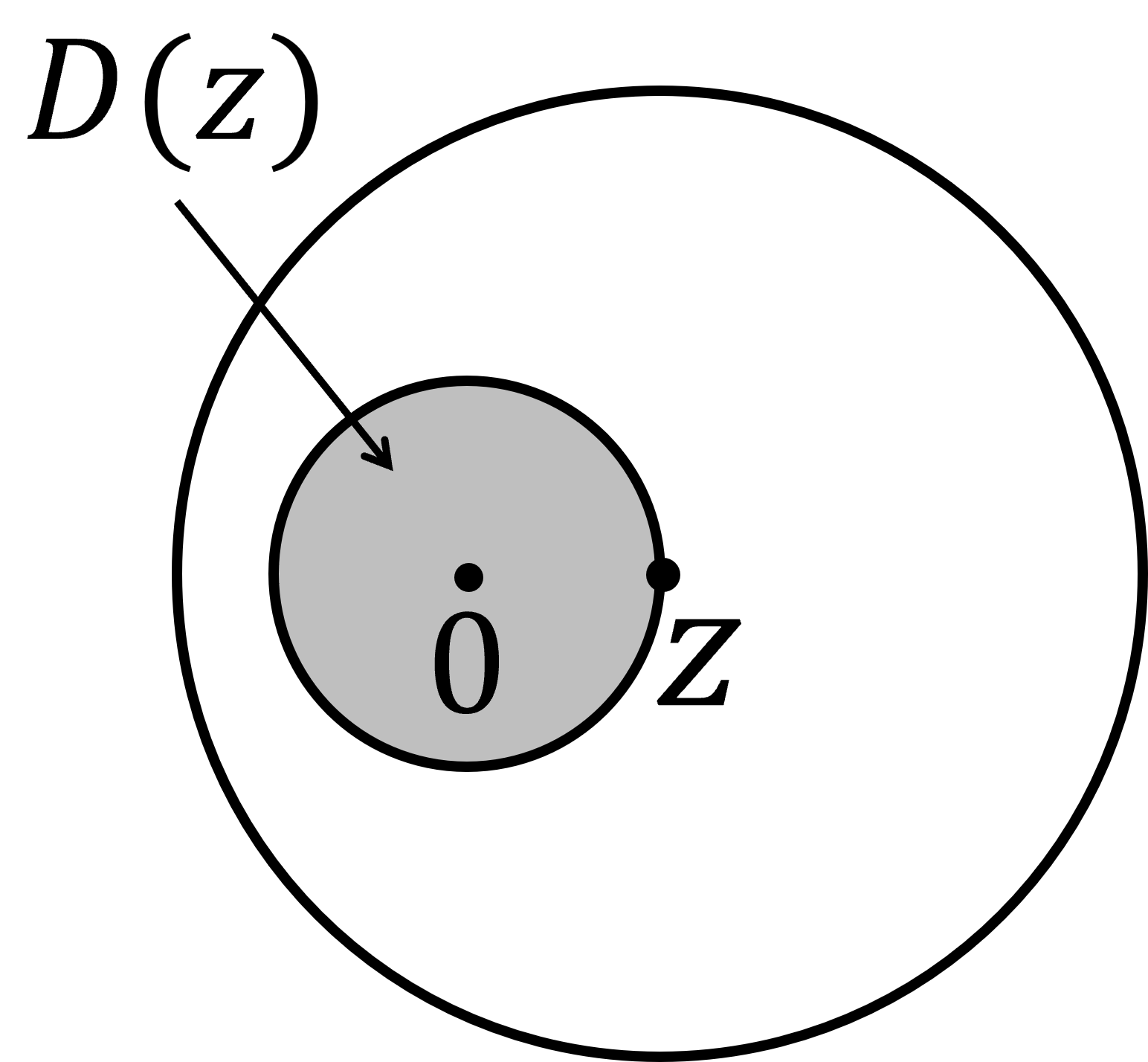}
\caption{{\itshape Case~1.} in the proof of {\itshape Example~\ref{ex:muPS}.}}
\end{center}
\end{figure}

\noindent{\itshape Case~2.} In order to consider this case, we introduce the following notations:
\begin{itemize}
\item Let $\triangle abc$ denote the triangle with vertices $a,b,c$.
\item Let $T(abcd)$ denote the tetragon with vertices $a,b,c,d$.
\item Let $S(o;ab)$ denote the sector with an arc $ab$ and a center $o$.
\item Let $C_r(o)$ denote the circle with a center $o$ and radius $r$.
\end{itemize}
Let $p^{+}$ and $p^{-}$ denote the intersections between $C_r(0)$ and $C_1(z)$,  $\alpha$ denote the angle between the two geodesics $op^{+}$ and $oz$, $\beta$ denote that between $zp^{+}$ and $zo$. 
\begin{figure}[htbp]
\begin{center}
\begin{minipage}{40mm}
\includegraphics[width=30mm]{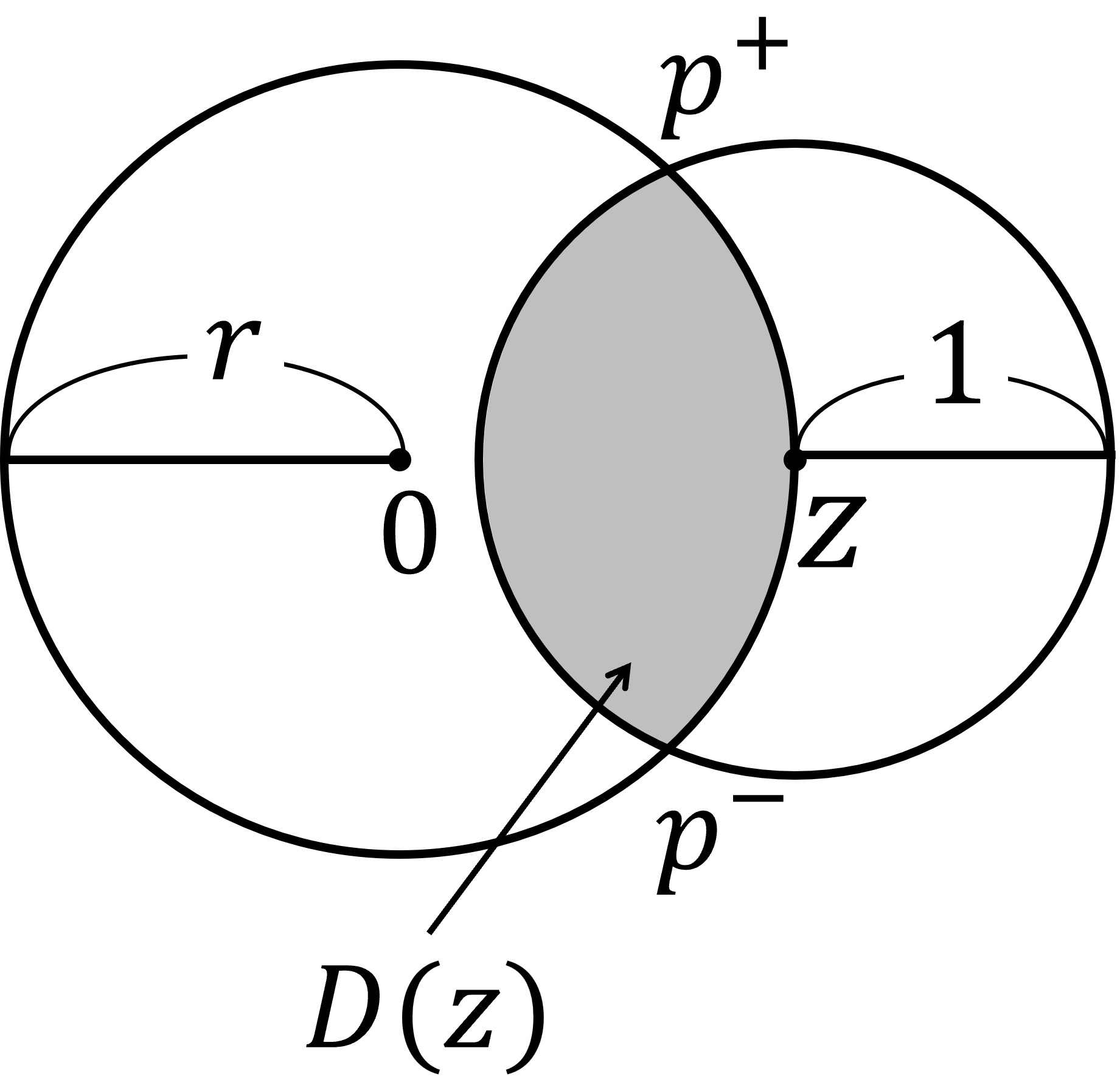}
\end{minipage}
\begin{minipage}{30mm}
\includegraphics[width=21mm]{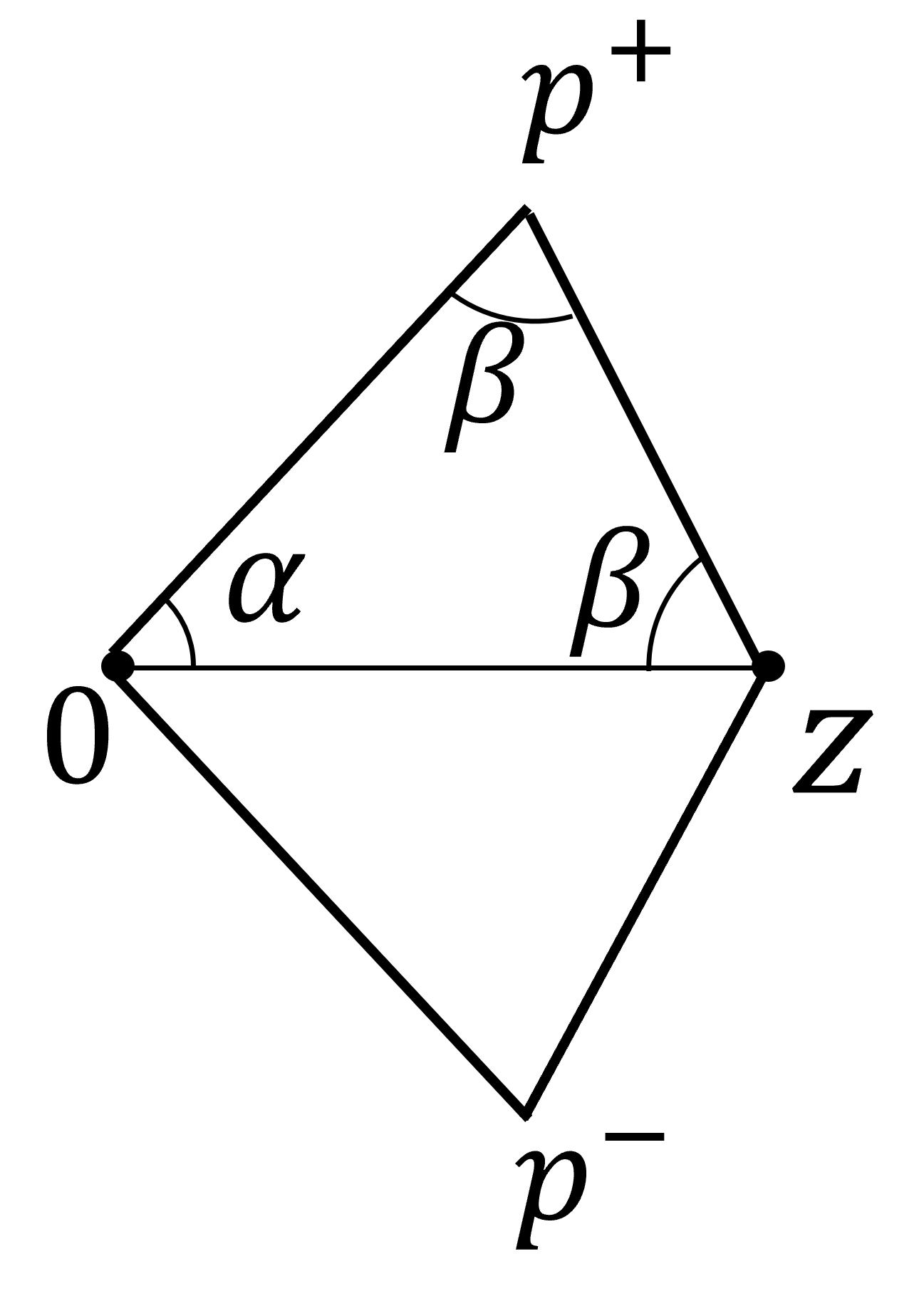}
\end{minipage}
\caption{{\itshape Case~2.} in the proof of {\itshape Example~\ref{ex:muPS}.}}
\end{center}
\end{figure}
We have
\begin{align*}
\mu \left( D(z)\right) &= \mu\left( S(0;p^{+}p^{-}) \right) + \mu\left( S(z;p^{+}p^{-})\right) - \mu \left( T(op^{-}zp^{+}) \right) \\
&= \int_{-\alpha}^{\alpha}\int_{0}^{r/2} 2 \sinh(2t) dtd\theta + \int_{-\beta}^{\beta}\int_0^{1/2} 2\sinh(2t)dtd\theta - 2\mu(\triangle ozp^{+}) \\
&= 2\alpha (\cosh{r} -1) +2\beta (\cosh{1}-1) -2(\pi- \alpha -2\beta) \\
&= 2\alpha \cosh{r} +2\beta (\cosh{1}+1)-2\pi .
\end{align*}
The third equality follows by Fact~\ref{fact:hyp}(2). By the first law of cosines (Fact~\ref{fact:hyp}(1)),
\begin{align*}
\cos\alpha &= \frac{\cosh^2{r}-\cosh{1}}{\sinh^2{r}},\\
\cos\beta &= \frac{\cosh{r}\cosh{1}-\cosh{r}}{\sinh{r}\sinh{1}}.
\end{align*}
Therefore, we have the following evaluation of $\partial c(z)$:
\begin{align*}
\partial c(z) &= 2\pi (\cosh{1}-1) -4\cos^{-1}\left( \frac{\cosh^2{r}-\cosh{1}}{\sinh^2{r}} \right)\cosh{r} \\
& \qquad -4 \cos^{-1}\left( \frac{\cosh{r}\cosh{1}-\cosh{r}}{\sinh{r}\sinh{1}} \right)(\cosh{1}+1) +4\pi \tag{**} \\
&\geq \lim_{r\to \infty} \left\{  2\pi (\cosh{1}-1) -4\cos^{-1}\left( \frac{\cosh^2{r}-\cosh{1}}{\sinh^2{r}} \right)\cosh{r} \right.\\
& \qquad \left. -4 \cos^{-1}\left( \frac{\cosh{r}\cosh{1}-\cosh{r}}{\sinh{r}\sinh{1}} \right)(\cosh{1}+1) +4\pi \right\} \\
&= \frac{(1+e)^2\tan^{-1}\left( \frac{e-1}{2\sqrt{e}}\right)}{2e} -\frac{e-1}{e} > 0 .
\end{align*}
The second inequality holds since the function (**) as $r$ is monotonically decreasing in $r>1/2$. We checked this, the third equality, and the last evaluation by calculation of computer.

By {\itshape Case~1} and {\itshape Case~2}, we have $\partial c (z) \geq \varepsilon$ for all $z \in D^2$, where \[ \varepsilon := \min \left\{ 2\pi (\cosh{1} - 1) -4\pi\left(\cosh{\frac{1}{2}} -1\right) ,\frac{(1+e)^2\tan^{-1}\left( \frac{e-1}{2\sqrt{e}}\right)}{2e} -\frac{e-1}{e} \right\} > 0.\] This is the desired result.
\qed

\section{Coarse property of $\mu$-PS}
In this section, we consider the following question:
\begin{question}
Is the existence of $\mu$-PS well-defined up to coarse equivalence?
\end{question}
For Ponzi schemes, the answer to this question is YES, in other words, the following fact is known  (proved in \cite{BW92} and \cite{Roe03}):
\begin{fact}{\rm (\cite[Prop.\ 3.35]{Roe03})}
If $X$ and $Y$ are coarsely equivalent, and $X$ admits a Ponzi scheme, then $Y$ also admits a Ponzi scheme.
\end{fact}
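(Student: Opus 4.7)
The plan is to push the Ponzi scheme $\theta$ forward along a coarsely equivalent map $\varphi\colon X\to Y$. By Fact~\ref{fact:ceq} we may take $\varphi$ to be bornologous, effectively proper, and coarsely surjective, with $F_0 \in \mathcal{E}^Y$ satisfying $Y = F_0[\varphi(X)]$. Let $S \subset X$ be a uniform locally finite subset with $\supp \theta \subset S\times S$, let $E_\theta \in \mathcal{E}^X$ satisfy $\supp\theta \subset (S\times S)\cap E_\theta$, and let $E \in \mathcal{E}^X$ be the controlled set witnessing the effectiveness of $\partial \theta$.

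First I would show $S' := \varphi(S)$ is uniform locally finite in $Y$. For $F \in \mathcal{E}^Y$ and $y \in Y$, pick $x_0 \in X$ with $(\varphi(x_0), y) \in F_0$; any $\varphi(s) \in F_y$ with $s \in S$ gives $(\varphi(x_0), \varphi(s)) \in F_0^T\circ F$, so effective properness places $(x_0, s)$ in $(\varphi\times\varphi)^*(F_0^T\circ F) \in \mathcal{E}^X$, and uniform local finiteness of $S$ bounds the number of such $s$ (hence of distinct $\varphi(s)$) uniformly in $y$. Specializing to $F = \Delta_Y$ produces a uniform fiber bound $M := \sup_{y} \#(\varphi^{-1}(y)\cap S) < \infty$.

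Next, define the pushforward
\[ \theta'(y_1,y_2) := \sum_{s_i \in \varphi^{-1}(y_i)\cap S} \theta(s_1,s_2), \]
which satisfies $|\theta'| \leq M^2\|\theta\|_\infty$ and $\supp\theta' \subset (S'\times S') \cap (\varphi\times\varphi)_*(E_\theta)$; the latter lies in $\mathcal{E}^Y$ by bornologousness, so $\theta' \in C_1^u(Y)$. Rearranging the sums via the disjoint fiber decomposition $S = \bigsqcup_{y'\in S'} (\varphi^{-1}(y')\cap S)$ yields the identity
\[ \partial\theta'(y) = \sum_{s \in \varphi^{-1}(y)\cap S} \partial\theta(s), \]
which is non-negative since $\partial\theta \geq 0$, and vanishes for $y \notin S'$. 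In particular $\partial\theta' \in C^u_0(Y)$.

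Finally, set $F := F_0^T \circ (\varphi\times\varphi)_*(E) \in \mathcal{E}^Y$. For any $y \in Y$ choose $x_0$ with $(\varphi(x_0), y) \in F_0$; for each $s \in E_{x_0}\cap S$ one has $(\varphi(x_0),\varphi(s)) \in (\varphi\times\varphi)_*(E)$, hence $\varphi(s) \in F_y \cap S'$, and non-negativity of $\partial\theta$ gives
\[ \sum_{y' \in F_y\cap S'}\partial\theta'(y') = \sum_{s \in \varphi^{-1}(F_y)\cap S}\partial\theta(s) \geq \sum_{s \in E_{x_0}\cap S}\partial\theta(s) \geq 1, \]
establishing that $\theta'$ is a Ponzi scheme on $Y$. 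The main technical obstacle is that $\varphi$ need not be injective on $S$: naively identifying $S$ with $\varphi(S)$ loses information, so one must sum over fibers, and the existence of the uniform bound $M$ (which itself rests on effective properness plus coarse surjectivity) is what keeps both $\theta'$ bounded and the pushed-forward boundary identity clean.
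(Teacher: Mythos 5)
Your argument is correct: the fiber-summed pushforward $\theta'$, the uniform fiber bound $M$ obtained from effective properness plus coarse surjectivity, the identity $\partial\theta'(y)=\sum_{s\in\varphi^{-1}(y)\cap S}\partial\theta(s)$, and the effectiveness witness $F_0^T\circ(\varphi\times\varphi)_*(E)$ all check out (the only slip is cosmetic: with the paper's convention for $\circ$, the controlled set bounding $s$ in your local-finiteness step should be $(\varphi\times\varphi)^*(F_0\circ F)$ rather than $(\varphi\times\varphi)^*(F_0^T\circ F)$, which changes nothing since $\mathcal{E}^X$ is closed under transposes). The paper itself gives no proof of this statement — it is imported as a fact from Roe — but your pushforward strategy is the standard one and is exactly the strategy the paper does carry out for its measure-theoretic analogue in Theorem~\ref{thm:muPScp}, with your fiber sums playing the role of the pushforward $\varphi_*$ there.
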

However, for $\mu$-PSs, the answer is NO. Indeed, a coarse space $X$ that has a $\mu$-PS is coarsely equivalent to itself with the zero measure $0$, but it does not have a $0$-PS. To avoid such situation, we  impose an additional condition to coarsely equivalent maps. In this section, we prove the following theorem:
\begin{theorem}\label{thm:muPScp}
Let $(X,\mathcal{E}^X), (Y, \mathcal{E}^Y)$ be measurable coarse spaces, $\mu_X$ and $\mu_Y$ be their uniform measures respectively, and $(X,\mu_X)$ and $(Y,\mu_Y)$ be $\sigma$-finite. Let a measurable map $\varphi\colon (X, \mathcal{E}^X, \mu_X) \to (Y, \mathcal{E}^Y, \mu_Y)$ be measure effectively proper and coarsely equivalent. If a $\mu_X$-1 chain $c$ on $X$ is a $\mu_X$-PS, then the induced $\mu_Y$-1 chain $\varphi_* c$ is a $\mu_Y$-PS.
\end{theorem}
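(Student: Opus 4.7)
My plan is to verify directly the two defining conditions of a $\mu_Y$-PS for $\varphi_* c$: that it lies in $C_1(Y,\mu_Y)$, and that its boundary $\partial(\varphi_* c)$ is an effective $\mu_Y$-0 chain. The central mechanism is a pushforward identity $\partial(\varphi_* c) = \varphi_*(\partial c)$ holding $\mu_Y$-almost everywhere, which transfers the known effectiveness of $\partial c$ on $X$ into effectiveness of $\partial(\varphi_* c)$ on $Y$ via the coarse surjectivity supplied by Fact~\ref{fact:ceq}.

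First I would check that $\varphi_* c \in C_1(Y,\mu_Y)$. Since $\varphi$ is bornologous, $(\varphi \times \varphi)_*(E_c)$ is a controlled set in $Y$, and measurability of $\mathcal{E}^Y$ lets me enlarge it to a measurable controlled set $F_c$ containing $\supp \varphi_* c$ up to a $\mu_Y \otimes \mu_Y$-null set. Essential boundedness of $\varphi_* c$, together with the boundary identity $\partial(\varphi_* c) = \varphi_*(\partial c)$, should both fall out of a Fubini / change-of-variables computation from the definitions: $\sigma$-finiteness of $\mu_X$ and the uniformity bound $\sup_x \mu_X((E_c)_x) < \infty$ guarantee the absolute convergence needed to interchange the order of integration. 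The role of the \emph{measure effectively proper} hypothesis in this step is to provide the quantitative comparison between $\varphi_* \mu_X$ and $\mu_Y$ that controls $\|\varphi_* c\|_\infty$ by $\|c\|_\infty$.

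The main obstacle is transferring effectiveness. From the assumption on $c$ I have a measurable controlled $E \in \mathcal{E}^X$ with $\int_{E_x} \partial c \, d\mu_X \geq 1$ for every $x \in X$, and I must produce a measurable controlled $F \in \mathcal{E}^Y$ and a universal $\varepsilon > 0$ with $\int_{F_y} \partial(\varphi_* c) \, d\mu_Y \geq \varepsilon$ for every $y \in Y$. By Fact~\ref{fact:ceq} the coarsely equivalent map $\varphi$ is coarsely surjective, so there is $F_0 \in \mathcal{E}^Y$, which measurability of $\mathcal{E}^Y$ allows me to take measurable, such that each $y \in Y$ admits $x_y \in X$ with $(\varphi(x_y), y) \in F_0$. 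Setting $F := F_0 \circ (\varphi \times \varphi)_*(E)$ (enlarged to a measurable controlled set), one has $F_y \supset \varphi(E_{x_y})$, and combining the boundary identity with the measure comparison from measure effective properness yields
\[ \int_{F_y} \partial(\varphi_* c) \, d\mu_Y \;\geq\; \varepsilon \int_{E_{x_y}} \partial c \, d\mu_X \;\geq\; \varepsilon \]
uniformly in $y$. The delicate point is isolating this universal $\varepsilon$: the argument requires measure effective properness to give not only an effectively-proper preimage control but also a strictly positive lower bound on how $\varphi_*$ compares $\mu_X$ with $\mu_Y$ on controlled sections, which is the genuinely measure-theoretic content distinguishing this from the classical argument in \cite{Roe03}.
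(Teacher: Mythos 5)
Your overall architecture matches the paper's: show $\varphi_* c \in C_1(Y,\mu_Y)$, prove the commutation $\partial(\varphi_* c)=\varphi_*(\partial c)$ by a Fubini computation, and transport the effectiveness window $E$ through the coarse inverse (or, equivalently, coarse surjectivity) into a measurable controlled set $F$ on $Y$. Up to a harmless transpose ($F_0^T\circ(\varphi\times\varphi)_*E$ rather than $F_0\circ(\varphi\times\varphi)_*E$ with the paper's conventions for $\circ$ and $E_y$), this is the same route the paper takes via Propositions~\ref{prop:0chain}--\ref{prop:del}.

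The gap is in the step you yourself flag as delicate. You assert that the lower bound $\int_{F_y}\partial(\varphi_* c)\,d\mu_Y\geq\varepsilon$ requires ``a strictly positive lower bound on how $\varphi_*$ compares $\mu_X$ with $\mu_Y$ on controlled sections.'' No such lower bound is available: measure effective properness is a one-sided estimate $(\varphi_*\mu_X)(B)\leq C\,\mu_Y(B)$, and nothing in the hypotheses gives a reverse inequality. If your argument genuinely needed one, it would fail. The correct mechanism is that no comparison of measures enters at all at this point: the pushforward of a $0$-chain is \emph{defined} by the duality $\langle\varphi_* f,g\rangle_Y=\langle f,\varphi^*g\rangle_X$ (this is where measure effective properness and $\sigma$-finiteness are actually used --- to make $g\mapsto\langle f,\varphi^*g\rangle_X$ a bounded functional on $L^1(Y,\mu_Y)$ and hence an element of $L^\infty(Y,\mu_Y)$ via $(L^1)^*\cong L^\infty$). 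Taking $g=\chi_{F_y}$ gives the \emph{exact} identity
\begin{equation*}
\int_{F_y}\varphi_*(\partial c)\,d\mu_Y \;=\; \int_{\varphi^{-1}(F_y)}\partial c\,d\mu_X ,
\end{equation*}
and then $\partial c\geq 0$ a.e.\ together with the inclusion $E_{x_y}\subset\varphi^{-1}(F_y)$ yields $\geq\int_{E_{x_y}}\partial c\,d\mu_X\geq 1$, with no loss of constant and no $\varepsilon$. So the conclusion you want is true and even cleaner than you expect, but the justification you propose rests on a nonexistent hypothesis; replace the ``measure comparison'' by the change-of-variables identity built into the definition of $\varphi_*$.
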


We define the notion in Theorem~\ref{thm:muPScp}. Let $\varphi\colon(X,\mathcal{B}^X, \mu_X) \to (Y,\mathcal{B}^Y,\mu_Y)$ be a measurable map between measure spaces. Let $\varphi_*\mu_X$ denote the push-forward of $\mu_X$ by $\varphi$, i.e.,$(\varphi_*\mu_X)(B) := \mu_X(\varphi^{-1}(B))$ for all $B \in \mathcal{B}^Y$.
\begin{definition}
The map $\varphi$ is measure effectively proper if there exists a positive constant $C>0$ such that 
\[ (\varphi_* \mu_X)(B) \leq C \mu_Y(B)\]
for all $B \in \mathcal{B}^Y$.
\end{definition}

\begin{remark}
Let $\varphi\colon (X ,\mathcal{E}^X,\#_X) \to (Y, \mathcal{E}^Y, \#_Y)$ be a map between uniform locally finite coarse spaces with counting measures. If the map $\varphi$ is effectively proper, then it is measure effectively proper. Indeed, there exists $E_0 \in \mathcal{E}^X$ such that $(\varphi\times \varphi)^*(\Delta_Y) \subset E_0$. For any finite set $B =\{ b_1,\cdots, b_m\}$ in $Y$, we have 
\begin{align*}
(\varphi_*\#_X)(B) 
&= \#_X \varphi^{-1}\left( \bigcup_{i=1}^m b_i\right) \\
&= \#_X \varphi^{-1}\left( \bigcup_{i=1}^m (\Delta_Y)_{b_i}\right) \\
&\leq \sum_{i=1}^m \#_X \left( \varphi^{-1}(\Delta_Y)_{b_i} \right) \\
&\leq \sum_{i=1}^m \#_X (E_0)_{a_i}  \\
&\leq \left( \sup_{x \in X} \#_X (E_0)_x \right) \#_Y B ,
\end{align*}
where $a_i \in \varphi^{-1}(b_i)$. Therefore, $\varphi$ is measure effectively proper.
\end{remark}

We prove Theorem~\ref{thm:muPScp} step by step, showing first several propositions.
\begin{proposition}\label{prop:push}
Let $\varphi\colon (X,\mu_X) \to (Y,\mu_Y)$ be a measure effectively proper. Then, the following inequalities hold
\begin{flalign*}
&(1) \quad \int_Y |f| d(\varphi_* \mu_X) \leq C \int_Y |f| d\mu_Y & \\ 
&(2) \quad \int_{Y\times Y} |c| d(\varphi_*\mu_X\otimes\varphi_*\mu_X) \leq C^2 \int_{Y\times Y} |c| d\mu_Y\otimes\mu_Y &
\end{flalign*}
for any $f \in L^1(Y,\mu_Y)$ and $c \in L^1(Y\times Y ,\mu_Y\otimes\mu_Y )$.
\end{proposition}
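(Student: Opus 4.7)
The plan is to prove both inequalities by the standard measure-theoretic ladder, starting from indicator functions and extending by linearity, monotone convergence, and finally passing to $|f|$ (respectively $|c|$) for $L^1$ elements.

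For (1), the base case is immediate from the definition of measure effective properness: for any $B \in \mathcal{B}^Y$,
\[ \int_Y \mathbf{1}_B \, d(\varphi_*\mu_X) = (\varphi_*\mu_X)(B) \leq C \mu_Y(B) = C \int_Y \mathbf{1}_B \, d\mu_Y. \]
Linearity yields the inequality for nonnegative simple functions, the monotone convergence theorem extends it to all nonnegative measurable functions, and applying the result to $|f|$ gives (1).

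For (2), I would first upgrade the scalar hypothesis to a statement about the product measures, namely
\[ (\varphi_*\mu_X \otimes \varphi_*\mu_X)(E) \leq C^2 (\mu_Y \otimes \mu_Y)(E) \]
for every $E \in \mathcal{B}^Y \times \mathcal{B}^Y$. On measurable rectangles $B_1 \times B_2$ this is clear by applying the scalar bound in each coordinate. To pass to arbitrary product-measurable sets, I would invoke a monotone class argument: $\sigma$-finiteness of $\mu_Y$ together with the scalar bound forces $\varphi_*\mu_X$ to be $\sigma$-finite as well, so the product measure is uniquely determined by its values on rectangles, and the inequality is preserved under countable increasing unions and proper differences. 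From this set-level inequality, the same ladder used for (1) applied to $|c|$ yields (2).

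The only mildly delicate step is the extension from rectangles to all product-measurable sets; everything else is routine integration. The $\sigma$-finiteness hypothesis inherited from Theorem~\ref{thm:muPScp} is exactly what licenses that extension.
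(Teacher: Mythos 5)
Your part (1) is the paper's proof: indicators, linearity, and a limit of simple functions. For part (2) you take a genuinely different route. The paper never proves a set-level inequality for the product measure; it writes $\int_{Y\times Y}|c|\,d(\varphi_*\mu_X\otimes\varphi_*\mu_X)$ as an iterated integral by Tonelli's theorem and applies (1) once in each variable, which gives $C^2$ immediately. Your plan --- first establish $(\varphi_*\mu_X\otimes\varphi_*\mu_X)(E)\leq C^2(\mu_Y\otimes\mu_Y)(E)$ for all $E\in\mathcal{B}^Y\times\mathcal{B}^Y$ and then rerun the ladder --- is a legitimate alternative, and your remark that the scalar bound together with $\sigma$-finiteness of $\mu_Y$ forces $\varphi_*\mu_X$ to be $\sigma$-finite is a point the paper leaves implicit even though its own Tonelli step needs it.

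However, the mechanism you invoke for the extension from rectangles is flawed. Write $\nu:=\varphi_*\mu_X\otimes\varphi_*\mu_X$ and $\lambda:=\mu_Y\otimes\mu_Y$. The class $\mathcal{D}=\{E:\nu(E)\leq C^2\lambda(E)\}$ is \emph{not} closed under proper differences: for $A\subset B$ with both in $\mathcal{D}$, the desired inequality $\nu(B)-\nu(A)\leq C^2\bigl(\lambda(B)-\lambda(A)\bigr)$ is equivalent to $\nu(B)-C^2\lambda(B)\leq\nu(A)-C^2\lambda(A)$, and knowing that both of these slacks are nonpositive says nothing about their order (e.g.\ $\nu(B)=C^2\lambda(B)$ while $\nu(A)<C^2\lambda(A)$ would violate it). So a Dynkin $\pi$-$\lambda$ argument does not go through as stated. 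Two standard repairs: (i) restrict to a rectangle $R$ on which both $\nu$ and $\lambda$ are finite; there $\mathcal{D}$ contains the algebra of finite disjoint unions of rectangles and is closed under increasing unions \emph{and} decreasing intersections (the latter using finiteness), so the monotone class theorem applies, and one then exhausts $Y\times Y$ by $\sigma$-finiteness; or (ii) use the Carath\'eodory description of the product measure, $\nu(E)=\inf\sum_i\nu(A_i\times B_i)$ over countable rectangle covers of $E$, which transports the inequality cover by cover. Either fix closes the gap; alternatively, adopting the paper's Tonelli computation avoids the set-level statement entirely.
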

\begin{proof}
(1) Let $f_n := \sum_i a^{(n)}_i \chi_{B^{(n)}_i}$ be a sequence of simple functions approximating $|f| \in L^1(Y,\mu_Y)$. We have 
\begin{eqnarray*}
\int_Y |f| d(\varphi_* \mu_X) 
&=& \lim_{n\to \infty} \int_Y \sum_i a_i^{(n)} \chi_{B^{(n)}_i} d(\varphi_*\mu_X) \\
&=& \lim_{n\to \infty} \sum_i a^{(n)}_i \mu_X\left( \varphi^{-1}(B^{(n)}_i) \right) \\
&\leq & \lim_{n\to \infty} \sum_i C a^{(n)}_i \mu_Y(B^{(n)}_i) \\
&=& C \lim_{n\to \infty} \int_Y f_n d\mu_Y \\
&=& C \int_Y |f| d\mu_Y .
\end{eqnarray*}
(2) By Toneli's theorem and (1), we have 
\begin{align*}
   \vspace{-10pt} &\int_{Y\times Y} |c(y_1,y_2)| d(\varphi_*\mu_x \otimes \varphi_*\mu_X)(y_1,y_2) \\
&= \int_Y \left(\int_Y |c(y_1,y_2)| d(\varphi_* \mu_X)(y_1)\right) d(\varphi_*\mu_X)(y_2) \\
&\leq  \int_Y C \left( \int_Y |c(y_1,y_2)| d\mu_Y(y_1)\right) d(\varphi_* \mu_X)(y_2) \\
&\leq C^2 \int_Y \left( \int_Y |c(y_1,y_2)| d\mu_Y(y_1) \right) d \mu_Y(y_2) \\
&= C^2 \int_{Y\times Y} |c(y_1,y_2)| d(\mu_Y \otimes \mu_Y)(y_1,y_2) 
\end{align*}
\qed
\end{proof}

For $f \in L^{\infty}(X,\mu_X), g \in L^1(X, \mu_X)$, a pairing $\langle f,g\rangle_X$ is defined by 
\[\langle f,g\rangle_X:= \int_X f(x)g(x) d\mu(x).\]

\begin{proposition}\label{prop:0chain}
Let $\varphi$ be measure effectively proper, and $f$ be a $\mu_X$-0 chain on $X$. Then the push-forward $\varphi_* f$ is a $\mu_Y$-0 chain, which is characterized by $\langle \varphi_* f, g\rangle_Y = \langle f,\varphi^* g\rangle_X$ for any $g \in L^1(Y,\mu_Y) $.
\end{proposition}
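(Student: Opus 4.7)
The plan is to define $\varphi_* f$ by applying the duality $\left(L^1(Y,\mu_Y)\right)^* \cong L^{\infty}(Y,\mu_Y)$ from Fact~\ref{fact:mth}(3) to the bounded linear functional $g \mapsto \langle f, \varphi^* g\rangle_X$ on $L^1(Y,\mu_Y)$, where $\varphi^* g := g \circ \varphi$, and then read off essential boundedness of the resulting element of $L^{\infty}(Y,\mu_Y)$.

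First I would check that this functional is well-defined and bounded. For $g \in L^1(Y,\mu_Y)$, measurability of $\varphi$ gives that $\varphi^* g = g \circ \varphi$ is measurable, and the standard change-of-variables formula together with Proposition~\ref{prop:push}(1) yields
\[
\int_X |\varphi^* g|\, d\mu_X \;=\; \int_Y |g|\, d(\varphi_*\mu_X) \;\leq\; C \int_Y |g|\, d\mu_Y,
\]
so $\varphi^* g \in L^1(X,\mu_X)$ with $\|\varphi^* g\|_{L^1(X)} \leq C\|g\|_{L^1(Y)}$. Combined with the $L^{\infty}$-$L^1$ pairing bound this gives
\[
|\langle f,\varphi^* g\rangle_X| \;\leq\; \|f\|_{L^{\infty}(X)}\,\|\varphi^* g\|_{L^1(X)} \;\leq\; C\,\|f\|_{L^{\infty}(X)}\,\|g\|_{L^1(Y)}.
\]

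Next, since $(Y,\mu_Y)$ is $\sigma$-finite, Fact~\ref{fact:mth}(3) produces a unique element $\varphi_* f \in L^{\infty}(Y,\mu_Y)$ satisfying $\langle \varphi_* f, g\rangle_Y = \langle f,\varphi^* g\rangle_X$ for every $g \in L^1(Y,\mu_Y)$, with $\|\varphi_* f\|_{L^{\infty}(Y)} \leq C\|f\|_{L^{\infty}(X)}$. Since essential boundedness is exactly the definition of a $\mu_Y$-0 chain, we obtain $\varphi_* f \in C_0(Y,\mu_Y)$, and the pairing identity furnished by the duality is itself the required characterization.

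I do not anticipate a serious obstacle: the argument is purely functional-analytic, and the only nontrivial estimate has already been proved in Proposition~\ref{prop:push}. The only minor point requiring care is the change-of-variables identity $\int_X g \circ \varphi\, d\mu_X = \int_Y g\, d(\varphi_*\mu_X)$, which is standard and is verified by the same simple-function approximation used in the proof of Proposition~\ref{prop:push}(1).
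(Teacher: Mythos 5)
Your proposal is correct and follows essentially the same route as the paper: bound the functional $g \mapsto \langle f, \varphi^* g\rangle_X$ on $L^1(Y,\mu_Y)$ using the change-of-variables identity and Proposition~\ref{prop:push}(1), then invoke the $\sigma$-finite duality $\left(L^1(Y,\mu_Y)\right)^* \cong L^{\infty}(Y,\mu_Y)$ from Fact~\ref{fact:mth}(3) to realize $\varphi_* f$ as an essentially bounded function. The only difference is that you spell out the intermediate estimate $\|\varphi^* g\|_{L^1(X)} \leq C\|g\|_{L^1(Y)}$ as a separate step, which the paper folds into a single chain of inequalities.
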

\begin{proof}
 We prove that $\varphi_* f$ is a bounded functional on $L^1(Y,\mu_Y)$. Then, the functional $\varphi_* f$ can be considered as a $L^{\infty}$ function on $Y$ by Fact~\ref{fact:mth}(3).

 For a $L^1$ function $g$ on $Y$, we have 
 \begin{eqnarray*}
|\langle \varphi_* f, g\rangle_Y| 
&\leq & \int_X |f(x)| |g\varphi (x)| d\mu_X(x) \\
&\leq& ||f||_{L^{\infty}(X)} \int_X |g \varphi(x)|d\mu_X(x) \\
&=& ||f||_{L^{\infty}(X)} \int_Y |g(y)|d(\varphi_*\mu_X)(y) \\
&\leq & C ||f||_{L^{\infty}(X)} ||g||_{L^1(Y)} .
 \end{eqnarray*}
 The last inequality follows by Proposition~\ref{prop:push}. Therefore, $\varphi_* f$ is well-defined as $L^{\infty}$ function on $Y$.
 \qed
\end{proof}

\begin{proposition}
Let $\varphi$ be measure effectively proper and bornologous, and c be a $\mu_X$-1 chain. Then, the push-forward $\varphi_* c$ is $\mu_Y$-1 chain on $Y$, which is characterized by $\langle \varphi_* c, g\rangle_{Y\times Y} = \langle c, \varphi^* g\rangle_{Y\times Y}$ for any $g \in L^1(Y\times Y, \mu_Y\otimes \mu_Y)$.
\end{proposition}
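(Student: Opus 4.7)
The plan is to mirror Proposition~\ref{prop:0chain} by defining $\varphi_* c$ as the element of $L^\infty(Y\times Y, \mu_Y\otimes\mu_Y)$ that represents the linear functional
\[
  \Phi(g) := \langle c, \varphi^* g\rangle_{X\times X}, \qquad g \in L^1(Y\times Y, \mu_Y\otimes\mu_Y).
\]
Since $(Y,\mu_Y)$ is $\sigma$-finite, so is $(Y\times Y, \mu_Y\otimes \mu_Y)$, and Fact~\ref{fact:mth}(3) produces the desired $L^\infty$ function once $\Phi$ is shown to be bounded. The verification then splits into two parts: essential boundedness of $\varphi_* c$, and the existence of a measurable controlled set $E_{\varphi_* c} \in \mathcal{E}^Y \cap \mathcal{B}^Y\times \mathcal{B}^Y$ off which $\varphi_* c$ vanishes almost everywhere.

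For boundedness, I would estimate
\[
  |\Phi(g)| \;\leq\; \|c\|_{L^\infty(X\times X)} \int_{X\times X} |g\circ (\varphi\times\varphi)|\, d(\mu_X\otimes \mu_X),
\]
then rewrite the right-hand integral as $\int_{Y\times Y} |g|\, d((\varphi_*\mu_X)\otimes (\varphi_*\mu_X))$ via the push-forward change-of-variable formula together with the identity $(\varphi\times \varphi)_*(\mu_X\otimes \mu_X) = (\varphi_*\mu_X)\otimes (\varphi_*\mu_X)$, which follows from $\sigma$-finiteness and uniqueness of product measures. Proposition~\ref{prop:push}(2) then bounds the expression by $C^2\|c\|_{L^\infty(X\times X)}\|g\|_{L^1(Y\times Y)}$.

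For the support condition, since $c$ is a $\mu_X$-1 chain there exists $E_c \in \mathcal{E}^X\cap \mathcal{B}^X\times \mathcal{B}^X$ with $\int_{X\times X\setminus E_c}|c|\, d(\mu_X\otimes \mu_X) = 0$. Because $\varphi$ is bornologous, $(\varphi\times\varphi)_*(E_c) \in \mathcal{E}^Y$, and the measurability of $\mathcal{E}^Y$ supplies $\tilde E \in \mathcal{E}^Y \cap \mathcal{B}^Y\times \mathcal{B}^Y$ with $(\varphi\times\varphi)_*(E_c)\subset \tilde E$, equivalently $E_c \subset (\varphi\times \varphi)^{-1}(\tilde E)$. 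For any $g \in L^1(Y\times Y,\mu_Y\otimes \mu_Y)$ vanishing on $\tilde E$, the pullback $\varphi^* g$ vanishes on $(\varphi\times\varphi)^{-1}(\tilde E) \supset E_c$, while $c=0$ a.e.\ off $E_c$, so the product $c\cdot \varphi^* g$ is zero a.e., and hence $\langle \varphi_* c, g\rangle_{Y\times Y} = \Phi(g) = 0$. Applying the duality $L^\infty\cong (L^1)^*$ on the $\sigma$-finite subspace $Y\times Y\setminus \tilde E$ to the restricted functional forces $\varphi_* c$ to vanish $\mu_Y\otimes\mu_Y$-a.e.\ on $Y\times Y\setminus \tilde E$, so $E_{\varphi_* c} := \tilde E$ works. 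The main obstacle is precisely that $(\varphi\times\varphi)_*(E_c)$ need not itself be measurable; it is the measurability hypothesis on $\mathcal{E}^Y$ that rescues the argument by furnishing the measurable enclosure $\tilde E$.
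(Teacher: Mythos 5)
Your proposal is correct and follows essentially the same route as the paper: boundedness via the $L^\infty\cong(L^1)^*$ duality and Proposition~\ref{prop:push}, and the support condition via a measurable controlled enclosure $\tilde E\supset(\varphi\times\varphi)_*(E_c)$ (the paper calls it $F_c$) obtained from bornologousness plus measurability of $\mathcal{E}^Y$, with the vanishing off $\tilde E$ checked by splitting the integral over $E_c$ and its complement. Your explicit remark that $(\varphi\times\varphi)_*(\mu_X\otimes\mu_X)=(\varphi_*\mu_X)\otimes(\varphi_*\mu_X)$ by $\sigma$-finiteness is a detail the paper leaves implicit, but the argument is the same.
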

\begin{proof}
We can check that $\varphi_* c \in L^{\infty}(Y\times Y, \mu_Y\otimes \mu_Y)$ in the same way of the last proposition. We prove that there exists $F_c \in \mathcal{E}^Y \cap (\mathcal{B}^Y \times \mathcal{B}^Y)$ such that $\int_{Y\times Y \setminus F_c} |\varphi_* c| d\mu_Y\otimes \mu_Y = 0$.

Since the map $\varphi$ is bornologous, $(\varphi \times \varphi)_*(E_c) \in \mathcal{E}^Y$. We take $F_c \in \mathcal{E}^Y \cap (\mathcal{B}^Y \times \mathcal{B}^Y)$, containing $(\varphi \times \varphi)_*(E_c)$. It suffices that 
\[ \int_U |\varphi_* c| d\mu_Y\otimes \mu_Y = 0\]
for all $\mu_Y$ finite subsets $U$ in $Y\times Y \setminus F_c$, since the measure space $(Y,\mu_Y)$ is $\sigma$-finite. We have 
\begin{eqnarray*}
\vspace{-10pt} && \int_U |\varphi_* c| d\mu_Y \otimes \mu_Y \\
&=& \int_{Y\times Y} |\varphi_* c(y_1, y_2)| \chi_U (y_1,y_2) d\mu_Y\otimes\mu_Y (y_1,y_2) \\
&=& \int_{X\times X}|c(x_1,x_2)| \chi_U(\varphi(x_1),\varphi(x_2) ) d\mu_X\otimes\mu_X(x_1,x_2) \\
&=& \int_{X\times X \setminus E_c}|c(x_1,x_2)| \chi_U(\varphi(x_1),\varphi(x_2) ) d\mu_X\otimes\mu_X(x_1,x_2) \\
&&\qquad + \int_{E_c}|c(x_1,x_2)| \chi_U(\varphi(x_1),\varphi(x_2) ) d\mu_X\otimes\mu_X(x_1,x_2) .
\end{eqnarray*}
The first term vanishes by the assumption of $\mu_X$-1 chain, and the second term vanishes since $(\varphi\times \varphi)_* (E_c) \cap U = \emptyset$. Therefore, the bounded function $\varphi_* c$ on $Y\times Y$ is a $\mu_Y$-1 chain on $Y$.
\qed
\end{proof}

\begin{proposition} \label{prop:eff}
Let $\varphi$ be measure effectively proper and coarsely equivalent, and $f$ be an effective $\mu_X$-0 chain. Then the induced $\mu_Y$-0 chain $\varphi_* f$ is effective.
\end{proposition}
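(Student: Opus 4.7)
The plan is to verify both conditions in the definition of an effective $\mu_Y$-0 chain for $\varphi_* f$: nonnegativity almost everywhere, and the existence of a measurable controlled set $F \in \mathcal{E}^Y \cap \mathcal{B}^Y \times \mathcal{B}^Y$ such that $\int_{F_{y_0}} \varphi_* f\, d\mu_Y \geq 1$ for every $y_0 \in Y$. Nonnegativity is immediate from the characterization in Proposition~\ref{prop:0chain}: for any nonnegative $g \in L^1(Y,\mu_Y)$ one has $\langle \varphi_* f, g\rangle_Y = \langle f, \varphi^* g\rangle_X \geq 0$, since $f \geq 0$ a.e.\ $\mu_X$ and $\varphi^* g = g \circ \varphi \geq 0$, whence $\varphi_* f \geq 0$ a.e.\ $\mu_Y$.

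For the lower bound, I would construct the controlled set $F$ by composing the effectiveness datum of $f$ with witnesses of coarse equivalence. By Fact~\ref{fact:ceq}, $\varphi$ is bornologous, effectively proper, and coarsely surjective, so there exists $F_0 \in \mathcal{E}^Y$ with $Y = F_0[\varphi(X)]$; bornologousness gives $(\varphi \times \varphi)_*(E) \in \mathcal{E}^Y$, where $E \in \mathcal{E}^X \cap \mathcal{B}^X \times \mathcal{B}^X$ is the measurable controlled set witnessing effectiveness of $f$. Define $F$ to be any measurable controlled set in $\mathcal{E}^Y \cap \mathcal{B}^Y \times \mathcal{B}^Y$ containing $F_0^T \circ (\varphi \times \varphi)_*(E)$; its existence is guaranteed by the measurability of $\mathcal{E}^Y$.

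Then for each $y_0 \in Y$, coarse surjectivity yields $x_0 \in X$ with $(\varphi(x_0), y_0) \in F_0$, i.e., $(y_0, \varphi(x_0)) \in F_0^T$. For every $x \in E_{x_0}$ one has $(\varphi(x_0), \varphi(x)) \in (\varphi \times \varphi)_*(E)$, and composing these gives $(y_0, \varphi(x)) \in F_0^T \circ (\varphi \times \varphi)_*(E) \subseteq F$, so $\varphi(x) \in F_{y_0}$, i.e., $E_{x_0} \subseteq \varphi^{-1}(F_{y_0})$. Applying Proposition~\ref{prop:0chain} with $g = \chi_{F_{y_0}}$ (which lies in $L^1(Y,\mu_Y)$ since $\mu_Y$ is uniform on $\mathcal{E}^Y$) gives
\[
\int_{F_{y_0}} \varphi_* f\, d\mu_Y = \int_X f(x)\, \chi_{F_{y_0}}(\varphi(x))\, d\mu_X(x) = \int_{\varphi^{-1}(F_{y_0})} f\, d\mu_X \geq \int_{E_{x_0}} f\, d\mu_X \geq 1,
\]
where the first inequality uses $f \geq 0$ a.e.\ and the second is the effectiveness of $f$.

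The only subtle point I anticipate is measurability bookkeeping: the raw composition $F_0^T \circ (\varphi \times \varphi)_*(E)$ is controlled but need not be in $\mathcal{B}^Y \times \mathcal{B}^Y$, and this is handled by replacing it with a measurable superset via the measurability axiom of $\mathcal{E}^Y$. Apart from this, the argument is essentially a diagram chase packaging coarse surjectivity, bornologousness, and the duality between $\varphi_*$ and $\varphi^*$.
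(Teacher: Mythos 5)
Your proof is correct and takes essentially the same route as the paper: both arguments compose $(\varphi\times\varphi)_*(E)$ with a controlled set witnessing that $\varphi(X)$ is coarsely dense in $Y$ (the paper uses the coarse inverse $\psi$ and the controlled set $\{(y,\varphi\psi(y)) : y\in Y\}$, where you invoke coarse surjectivity and $F_0$ directly --- an interchangeable choice), then pass to a measurable controlled superset and conclude via the pairing identity $\langle \varphi_* f, \chi_{F_{y_0}}\rangle_Y = \langle f, \varphi^*\chi_{F_{y_0}}\rangle_X$. Your explicit verification that $\varphi_* f \geq 0$ a.e.\ and that $\chi_{F_{y_0}}\in L^1(Y,\mu_Y)$ are details the paper leaves implicit, so no gap on either side.
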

\begin{proof}
Let a measurable controlled set $E_f \in \mathcal{E}^X \cap (\mathcal{B}^X\times \mathcal{B}^X)$ satisfy 
\[ \int_{(E_f)_{x_0}} f(x) d\mu_X(x) \geq 1 ,\]
and $\psi\colon Y \to X$ be a coarsely inverse map, i.e.,  they satisfy $\phi\circ \varphi \sim id_X$, and $\varphi \circ \psi \sim id_Y$. We prove the following claim:
\begin{claim}
There exists $F_f \in \mathcal{E}^Y \cap (\mathcal{B}^Y\times \mathcal{B}^Y)$ such that it satisfies $(E_f)_{\psi(y)} \subset \varphi^{-1}((F_f)_y)$ for all $y \in Y$.
\end{claim}
\begin{proof}
Since $\varphi$ is bornologous, and $\varphi\circ\psi \sim id_Y$, we have 
\[ \{(y,\varphi\psi(y) : y\in Y\} \circ (\varphi\times \varphi)_* E_f \in \mathcal{E}^Y.\]
Thus, there exists $F_f \in \mathcal{E}^Y \cap (\mathcal{B}^Y\times \mathcal{B}^Y)$ containing the above set. We have 
\begin{align*}
\vspace{-10pt}&\left( \{(y^{\prime},\varphi\psi(y^{\prime}): y^{\prime} \in Y\}\circ (\varphi \times \varphi)_*E_f \right)_y \\
& = \{ y^{\prime} \in Y: (\varphi\psi(y), y^{\prime}) \in (\varphi \times \varphi)_* E_f\} \\
& \supset \{ \varphi(x) \in Y : (\psi(y), x) \in E_f \} =\varphi\left( (E_f)_{\psi(y)}\right).
\end{align*}
Hence, $\varphi\left( (E_f)_{\psi(y)}\right) \subset \left( F_f\right)_y.$ Therefore, we have the desired result.
\qed
\end{proof}

We have for all $y_0 \in Y$
\begin{align*}
\int_{(F_f)_{y_0}} (\varphi_* f)(y) d\mu_Y(y) 
&= \int_Y \varphi_* f(y) \chi_{(F_f)_{y_0}} (y) d\mu_Y(y) \\
&= \int_X f(x) \chi_{(F_f)_{y_0}} (\varphi(x)) d\mu_X(x) \\
&= \int_X f(x) \chi_{\varphi^{-1}\left((F_f)_{y_0}\right)}(x) d\mu_X(x) \\
&= \int_{\varphi^{-1} \left( (F_f)_{y_0} \right)} f(x)  d\mu_X(x) \\
&\geq \int_{(E_f)_{\psi(y_0)}} f(x) d\mu_X(x) \geq 1 .\\
\end{align*}
Therefore, the induced $\mu_Y$-0 chain $\varphi_* f$ is effective.
\qed
\end{proof}

\begin{proposition}\label{prop:del}
Let $\varphi$ be measure effectively proper and bornologous. Then, we have
$\partial \varphi_* = \varphi_* \partial$. More precisely, we have $\partial (\varphi_* c) = \varphi_* (\partial c)$ for a $\mu_Y$-1 chain $c$ on $Y$.
\end{proposition}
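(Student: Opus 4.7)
The plan is to verify the identity weakly. I would show that for every test function $g\in L^1(Y,\mu_Y)$ one has $\langle\partial(\varphi_* c),g\rangle_Y=\langle\varphi_*(\partial c),g\rangle_Y$; since $(Y,\mu_Y)$ is $\sigma$-finite, the duality $L^\infty\cong(L^1)^*$ from Fact~\ref{fact:mth}(3) then forces equality in $L^\infty(Y,\mu_Y)$, which is the desired identity a.e.

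Fix $g\in L^1(Y,\mu_Y)$. Unfolding the definition of $\partial$ and applying Fubini yields
\begin{align*}
\langle\partial(\varphi_* c),g\rangle_Y
&= \int_{Y\times Y}(\varphi_* c)(y',y)\,g(y)\,d(\mu_Y\otimes\mu_Y)(y',y)\\
&\quad -\int_{Y\times Y}(\varphi_* c)(y,y')\,g(y)\,d(\mu_Y\otimes\mu_Y)(y,y').
\end{align*}
The naive test functions $(y',y)\mapsto g(y)$ and $(y,y')\mapsto g(y)$ are not in $L^1(Y\times Y,\mu_Y\otimes\mu_Y)$ in general, so the pairing characterization of $\varphi_* c$ cannot be applied directly. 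To remedy this I would take the measurable controlled set $F_c\supset(\varphi\times\varphi)_*(E_c)$ supplied by the preceding proposition, and truncate by setting $h_1(y',y):=g(y)\chi_{F_c}(y',y)$ and $h_2(y,y'):=g(y)\chi_{F_c^T}(y,y')$. The uniform-measure hypothesis gives $\sup_{y}\mu_Y((F_c)_y)\leq M<\infty$, so Tonelli's theorem yields $h_1,h_2\in L^1(Y\times Y,\mu_Y\otimes\mu_Y)$ with norms bounded by $M\|g\|_{L^1(Y,\mu_Y)}$. Because $\varphi_* c$ vanishes $\mu_Y\otimes\mu_Y$-a.e.\ off $F_c$, replacing $g(y)$ by $h_1$ and $h_2$ in the two integrals above does not change their values.

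Now apply the pairing identity $\langle\varphi_* c,h\rangle_{Y\times Y}=\langle c,\varphi^* h\rangle_{X\times X}$ from the preceding proposition to $h_1$ and $h_2$. Since $c$ vanishes off $E_c$ and $(\varphi\times\varphi)_*(E_c)\subset F_c$, the truncations $\chi_{F_c}\circ(\varphi\times\varphi)$ and $\chi_{F_c^T}\circ(\varphi\times\varphi)$ act as the constant $1$ on the support of $c$, and I obtain
\[ \langle\partial(\varphi_* c),g\rangle_Y=\int_{X\times X}c(x_1,x_2)\bigl(g(\varphi(x_2))-g(\varphi(x_1))\bigr)\,d(\mu_X\otimes\mu_X)(x_1,x_2). \]
Fubini on the $X$-side (justified by the same uniform-measure argument applied to $E_c$ and to $\mu_X$) then rewrites this as $\int_X g(\varphi(x))\,(\partial c)(x)\,d\mu_X(x)=\langle\partial c,\varphi^* g\rangle_X$, which equals $\langle\varphi_*(\partial c),g\rangle_Y$ by Proposition~\ref{prop:0chain}.

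The main obstacle is the truncation step: without the measurability of the controlled set $F_c$ and the uniform bound on the fibers $\mu_Y((F_c)_y)$, the integrands arising from the boundary operator would not sit inside $L^1(Y\times Y,\mu_Y\otimes\mu_Y)$, and the pairing identity characterizing $\varphi_* c$ could not be applied. Once that compatibility between support control and uniform measure is exploited, the rest of the proof is bookkeeping through Fubini and the defining adjunctions $\langle\varphi_*(\cdot),\cdot\rangle_Y=\langle\cdot,\varphi^*(\cdot)\rangle_X$.
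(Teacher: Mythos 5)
Your proposal is correct and follows essentially the same route as the paper's proof: pairing against $g\in L^1(Y,\mu_Y)$, truncating the test function by $\chi_{F_c}$ (with the uniform-measure bound on the fibers making the truncated function integrable on $Y\times Y$), transferring via the adjunction $\langle\varphi_*(\cdot),\cdot\rangle=\langle\cdot,\varphi^*(\cdot)\rangle$, discarding the indicator on the support of $c$, and finishing with Fubini and the $\sigma$-finite duality $L^\infty\cong(L^1)^*$. The paper's argument is exactly this computation written as a single chain of equalities, so no further comparison is needed.
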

\begin{proof}
Assume that $E_c \in \mathcal{E}^X \cap (\mathcal{B}^X\times \mathcal{B}^X)$ and $F_c \in \mathcal{E}^Y \cap (\mathcal{B}^Y\times \mathcal{B}^Y)$ satisfy the following conditions:
\begin{align*}
(\varphi \times \varphi)_* E_c &\subset F_c ,\\
\int_{Y\times Y \setminus F_c} |\varphi_* c| d\mu_Y\otimes \mu_Y &= \int_{X\times X\setminus E_c} |c| d\mu_X\otimes \mu_X = 0.
\end{align*}
For a $L^1$ function $g$ on $Y$, we have
\begin{align*}
\langle \partial \varphi_*c, g\rangle_Y 
&= \int_Y \partial \varphi_* c (y_2) g(y_2) d\mu_Y(y_2) \\
&= \int_Y \left( \int_Y (\varphi_* c)(y_1,y_2) d\mu_Y (y_1) - \int_Y (\varphi_* c)(y_2,y_1) d\mu_Y(y_1)\right) d\mu_Y(y_2) \\
&= \int_{Y\times Y} \varphi_*(y_1,y_2) \chi_{F_c}(y_1,y_2)g(y_2) d\mu_Y\otimes \mu_Y(y_1,y_2) \\
&\quad - \int_{Y\times Y} \varphi_*(y_2,y_1) \chi_{F_c}(y_2,y_1)g(y_2) d\mu_Y\otimes \mu_Y(y_1,y_2) \\
&= \int_{X\times X} c(x_1,x_2) \chi_{F_c} \left(\varphi(x_1), \varphi(x_2)\right) g\varphi(x_2) d\mu_X\otimes\mu_X (x_1,x_2) \\
& \quad - \int_{X\times X} c(x_2,x_1) \chi_{F_c} \left(\varphi(x_2), \varphi(x_1)\right) g\varphi(x_2) d\mu_X\otimes\mu_X (x_1,x_2)\\
&= \int_{X\times X} c(x_1,x_2) \chi_{(\varphi\times \varphi)^* F_c} \left(x_1, x_2\right) g\varphi(x_2) d\mu_X\otimes\mu_X (x_1,x_2) \\
& \quad - \int_{X\times X} c(x_2,x_1) \chi_{(\varphi \times \varphi)^* F_c} \left(x_2, x_1\right) g\varphi(x_2) d\mu_X\otimes\mu_X (x_1,x_2) \\
&= \int_{X\times X} c(x_1,x_2)  g\varphi(x_2) d\mu_X\otimes\mu_X (x_1,x_2) \\
& \quad - \int_{X\times X} c(x_2,x_1)g\varphi(x_2) d\mu_X\otimes\mu_X (x_1,x_2) \\
&= \int_X \left( \int_X c(x_1,x_2) d\mu(x_1) - \int_X c(x_2,x_1) d\mu_X(x_1)\right) g\varphi(x_2) d\mu_X(x_2) \\
&= \int_X \partial c(x_2) g\varphi(x_2) d\mu_X(x_2) \\
&= \langle \partial c , \varphi^* g\rangle_X = \langle \varphi_* \partial c ,g \rangle_Y.
\end{align*}
The 4th equality holds since the function $\chi_{F_c} (y_1,y_2) g(y_2)$ is a $L^1$ function on $Y\times Y$, and the 6th equality holds since the $\mu_X$-1 chain $c$ vanishes outside $(\varphi\times \varphi)^* F_c \supset E_c$. Therefore, we have $\partial \varphi_* = \varphi_* \partial$.
\qed
\end{proof}

Combining Proposition~\ref{prop:0chain} - Proposition~\ref{prop:del}, we obtain Theorem~\ref{thm:muPScp}.\\
{\itshape Proof of Theorem~\ref{thm:muPScp}.}
We have $\partial (\varphi_* c) = \varphi_* (\partial c)$ by Proposition~\ref{prop:del}. Since $\partial c$ is effective, thus $\varphi_*(\partial c)$ is effective by Proposition~\ref{prop:eff}. Therefore, $\varphi_* c$ is a $\mu_Y$-Ponzi scheme.
\qed

\section{Relationship between Ponzi scheme and $\mu$-PS }
\subsection{Proof of Theorem~1.1}
Let $(X,\mathcal{B}, \mathcal{E})$ be a measurable coarse space, and $\mu$ be a uniform measure with respect to $\mathcal{E}$, and $(X,\mu)$ be a $\sigma$-finite measure space. In this subsection, we consider the following question:
\begin{question}
Does there exist a  Ponzi scheme if there exists a $\mu$-PS? Does there exist a $\mu$-PS if there exists a Ponzi scheme?
\end{question}
We give a partial answer to this question in Theorem~\ref{thm:muPS}. In order to state the theorem, we prepare some notions.

\begin{definition}
\begin{enumerate}
\item A subset $\Lambda$ in $X$ is a quasi lattice if it is uniform locally finite and there exists a controlled set $E_0 \in \mathcal{E}$ such that $X=E_0[\Lambda]$.
\item Let $S$ be a subset in $X$. A measurable controlled set $E \in \mathcal{E}\cap (\mathcal{B}\times \mathcal{B}) $ is constant on $S$ with respect to $\mu$ if there exists a positive constant $C>0$ such that $\mu(E_s) = C $ for all $s \in S$.
\end{enumerate}
\end{definition}

\begin{theorem}\label{thm:muPS}
\begin{enumerate}
\item Assume $X$ has a $\mu$-PS and a quasi lattice $\Lambda$. Then, $X$ has a Ponzi scheme.
\item Assume $X$ has a Ponzi scheme $\theta$ such that $\supp \theta \subset S \times S$ for a uniform locally finite subset $S$, and there exists a measurable controlled set $E$ which is constant on S with respect to $\mu$. Then, $X$ has a $\mu$-PS. 
\end{enumerate}
\end{theorem}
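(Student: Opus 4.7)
\textbf{Proof plan for Theorem~\ref{thm:muPS}.}

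\textbf{Part (1).} Given a $\mu$-PS $c$ and a quasi lattice $\Lambda$ with controlled $E_0$ satisfying $X=E_0[\Lambda]$ (which we may take measurable by enlarging inside $\mathcal{E}\cap\mathcal{B}\times\mathcal{B}$), the plan is to \emph{discretize} $c$ along a measurable partition of $X$ indexed by $\Lambda$. I would fix an ordering of $\Lambda$ and define $\pi\colon X\to\Lambda$ by sending $x$ to the first $\lambda_i$ with $x\in (E_0)_{\lambda_i}$, so each fibre $P_\lambda:=\pi^{-1}(\lambda)$ is measurable, contained in $(E_0)_\lambda$, and the $\{P_\lambda\}$ partition $X$. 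Then set
\[
\theta(\lambda,\lambda') := \int_{P_\lambda}\!\int_{P_{\lambda'}} c(s,t)\,d\mu(s)\,d\mu(t).
\]
Boundedness is immediate from $|\theta(\lambda,\lambda')|\le\|c\|_{L^\infty}(\sup_z\mu((E_0)_z))^2$ using uniformity of $\mu$; the support sits in the uniform locally finite $\Lambda\times\Lambda$ and inside the controlled set $E_0\circ E_c\circ E_0^T$. A Fubini computation using the partition property gives
\[
\partial\theta(\lambda) = \int_{P_\lambda}\partial c(t)\,d\mu(t)\ \ge\ 0.
\]
For effectivity, let $E$ witness effectivity of $\partial c$ and take $E':=E\circ E_0^T$; for every $t\in E_x$ one has $(x,\pi(t))\in E\circ E_0^T=E'$, so
\[
\sum_{\lambda\in E'_x\cap\Lambda}\partial\theta(\lambda)\ =\ \int_{\bigcup_{\lambda\in E'_x\cap\Lambda}P_\lambda}\partial c\,d\mu\ \ge\ \int_{E_x}\partial c\,d\mu\ \ge\ 1.
\]

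\textbf{Part (2).} Given the Ponzi scheme $\theta$ with $\supp\theta\subset S\times S$ and the measurable controlled $E$ with $\mu(E_s)=C$ for all $s\in S$, the plan is to \emph{smear} $\theta$ across the cells $E_s$ by setting
\[
c(x,y) := \frac{1}{C^2}\sum_{s,t\in S}\theta(s,t)\,\chi_{E_s}(x)\,\chi_{E_t}(y).
\]
For fixed $(x,y)$ only finitely many summands are nonzero, since $\{s\in S:x\in E_s\}=E^T_x\cap S$ is finite by uniform local finiteness of $S$; the same bound controls $\|c\|_{L^\infty}$, and the support lies in the controlled set $E^T\circ E_\theta\circ E$, so $c\in C_1(X,\mu)$. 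A direct calculation using $\mu(E_s)=C$ on $\supp\theta$ gives
\[
\partial c(y)\ =\ \frac{1}{C}\sum_{u\in S}\chi_{E_u}(y)\,\partial\theta(u)\ \ge\ 0,
\]
and if $E'$ witnesses effectivity of $\partial\theta$ and $F\supset E'\circ E$ is any measurable controlled enlargement, then for every $y_0\in X$
\[
\int_{F_{y_0}}\partial c\,d\mu\ =\ \frac{1}{C}\sum_{u\in S}\partial\theta(u)\,\mu(F_{y_0}\cap E_u)\ \ge\ \sum_{u\in E'_{y_0}\cap S}\partial\theta(u)\ \ge\ 1.
\]

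\textbf{Main obstacle.} Once the discretization in (1) and the smearing in (2) are written down, both arguments reduce to Fubini-style rearrangements plus book-keeping of controlled sets under $\circ$ and $(\cdot)^T$. The delicate point is in part (1): producing a measurable, disjoint, $\Lambda$-indexed partition of $X$ with $P_\lambda\subset(E_0)_\lambda$ presupposes countability of $\Lambda$ (so that the inductive first-index construction terminates) and measurability of $E_0$. The latter follows since $\mathcal{E}$ is measurable, and the former should be extracted from $\sigma$-finiteness of $(X,\mu)$ combined with $\Lambda\subset E_0[\Lambda_\text{fin}]$ type arguments using uniform local finiteness; this packaging is the only step where one cannot proceed by pure formal manipulation.
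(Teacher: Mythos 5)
Your construction matches the paper's in both parts: the same ordered first-hit partition $P_\lambda\subset (E_0)_\lambda$ and discretization $\theta(\lambda,\lambda')=\int_{P_\lambda}\int_{P_{\lambda'}}c$ in (1) (which the paper packages as the pushforward along the quotient map $p\colon X\to\Lambda$ onto $(\Lambda,\#|_\Lambda)$ and then invokes Theorem~\ref{thm:muPScp}), and the identical smearing $\sum_{s,t}\theta(s,t)\chi_{E_s}(x)\chi_{E_t}(y)$ with the same effectivity estimate via $E'\circ E$ in (2). The one obstacle you flag --- well-definedness and measurability of the partition --- is resolved not by countability or $\sigma$-finiteness but by uniform local finiteness of $\Lambda$: the sets $\{s\in\Lambda : (E_0)_s\cap(E_0)_\lambda\neq\emptyset\}\subset (E_0\circ E_0^T)_\lambda\cap\Lambda$ and $\{\lambda\in\Lambda : x\in(E_0)_\lambda\}=(E_0^T)_x\cap\Lambda$ are finite, so under any total order each $P_\lambda$ is $(E_0)_\lambda$ minus a \emph{finite} union of measurable sets and every $x$ has a first index.
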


\begin{proof}
(1) There exists a measurable controlled set $E_0 \in \mathcal{E}\cap (\mathcal{B}\times \mathcal{B})$ such that $X = \bigcup_{\lambda \in \Lambda} (E_0)_\lambda$. We introduce a total order $\prec$ into $\Lambda$. Let  the  set $(\tilde{E_0})_\lambda$ be defined by
\begin{align*}
(\tilde{E_0})_\lambda 
&:= (E_0)_\lambda \setminus \bigcup_{s \prec \lambda} (E_0)_s \\
&= (E_0)_\lambda \setminus \bigcup \{ (E_0)_s : s \prec \lambda , (E_0)_s  \cap (E_0)_\lambda \neq \emptyset \}.
\end{align*}
The set $\{ s \in \Lambda : (E_0)_s \cap (E_0)_\lambda \neq \emptyset\}$ is finite, since such $ s\in \Lambda $ have to satisfy $ s \in (E_0\circ E_0^T)_\lambda$ and the quasi lattice $\Lambda$ is uniform locally finite. Thus, the set $(\tilde{E_0})_\lambda$ is measurable for all $\lambda \in \Lambda$, and we have $X = \bigsqcup_{\lambda \in \Lambda} (\tilde{E_0})_\lambda$ (the right-hand side stands for the disjoint union of all $(\tilde{E_0})_\lambda$).

The map $ p\colon X \to \Lambda $ defined by $p(x) = \lambda$ if $x \in (\tilde{E_0})_\lambda$ is measurable. Moreover, it is measure effectively proper from $(X,\mu_X)$ to $(X, \#|_\Lambda)$, since we have 
\begin{align*}
\mu (p^{-1}(B)) &= \mu \left( p^{-1}(\{ \lambda_1 ,\cdots, \lambda_m \})\right) \\
&= \mu \left( \bigsqcup_{i=1}^m (\tilde{E_0})_{\lambda_i} \right) \\
&= \sum_{i=1}^m \mu \left( (\tilde{E_0})_{\lambda_i}\right) \\
&\leq \left( \sup_{x\in X}\mu \left((E_0)_x\right) \right) \#|_\Lambda (B)
\end{align*}
for a subset $B=\{ \lambda_1, \cdots, \lambda_m\} \subset \Lambda$.

The map $p$ is coarsely surjective since $X= \bigcup_{x\in X} (E_0)_{p(x)}$. It is  bornologous and effectively proper, since we have
\begin{align*}
(p\times p)_*(E) &\subset E_0^T \circ E \circ E_0 \in \mathcal{E} \\
(p\times p)^*(E) &\subset E_0\circ E \circ E_0^T \in \mathcal{E}
\end{align*}
for any controlled set $E \in \mathcal{E}$. Thus, it is coarsely equivalent by Fact~\ref{fact:ceq}. Therefore, the coarse space $X$ has a Ponzi scheme by Theorem~\ref{thm:muPScp}.

(2) Let $\partial_X$ denote the boundary map from $C_1(X,\mu)$ to $C_0(X,\mu)$ and $\partial_S$ denote that from $C_1(X,\#|_S)$ to $C_0(X,\#|_S)$.  We consider a map $c\colon X \times X \to \bbbr$ defined by
\[ c(x,y) := \sum_{(s_1,s_2) \in S\times S} \theta(s_1,s_2) \chi_{E_{s_1}\times E_{s_2}}(x,y) . \]
The map $c$ is bounded since $S$ is a uniform locally finite subset in $X$. In order to analyze the function $c$, we prove the following equation.
\[ \partial_X c (x) = C \sum_{s\in S} \partial_S \theta (s) \chi_{E_s}(x) \]
Indeed, we have 
\begin{align*}
\partial_X c(x) 
&= \int_X c(y,x) d\mu(y) -\int_X c(x,y) d\mu(y) \\
&= \int_X  \left( \sum_{(s_1,s_2) \in S\times S} \theta (s_1,s_2) \chi_{E_{s_1}\times E_{s_2}}\right) (y,x) d\mu(y)\\
&\quad -\int_X  \left( \sum_{(s_1,s_2) \in S\times S} \theta (s_1,s_2) \chi_{E_{s_1}\times E_{s_2}}\right) (x,y) d\mu(y)\\
&= \sum_{s_2 \in E_x^T \cap S} \sum_{s_1 \in S} \int_X \theta(s_1,s_2) \chi_{E_{s_1}}(y) d\mu(y) \\
&\qquad - \sum_{s_1 \in E_x^T \cap S} \sum_{s_2 \in S} \int_X \theta(s_1,s_2) \chi_{E_{s_2}}(y) d\mu(y) \\
&= C \left( \sum_{s_2 \in E_x^T \cap S} \sum_{s_1 \in S} \theta (s_1,s_2) - \sum_{s_2 \in E_x^T \cap S} \sum_{s_1 \in S} \theta (s_2,s_1)\right) \\
&= C \left( \sum_{s \in E_x^T\cap S} \partial_S \theta (s) \right) \\
&= C \sum_{s\in S} \partial_S \theta (s) \chi_{E_s}(x) .
\end{align*}
By the above equation, we also have $\partial c \geq 0$.

Since the function $\theta$ is a Ponzi scheme, there exists $E^{\prime} \in \mathcal{E}$ such that 
\[ \sum_{s \in E^{\prime}_{x_0}\cap S} \partial_S \theta (s) \geq 1\]
for all $x_0 \in X$. Note that $E_s \subset (E^{\prime}\circ E)_{x_0} $ if $s \in E_{x_0}^{\prime}\cap S$. 
Let $\tilde{E}^{\prime}$ be a measurable controlled set containing $E^{\prime}\circ E$. We have 
\begin{align*}
\int_{(\tilde{E^{\prime}})_{x_0}} (\partial_X c)(x) d\mu(x) 
&= C \int_{(\tilde{E^{\prime}})_{x_0}} \sum_{s \in S} \partial_S \theta(s) \chi_{E_s} (x) d\mu(x) \\
&= C  \sum_{s \in S}\int_{(\tilde{E^{\prime}})_{x_0}}  \partial_S \theta(s) \chi_{E_s} (x) d\mu(x) \\
&\geq C \sum_{s\in E^{\prime}_{x_0}\cap S} \int_{E_s} \partial_S \theta (s) \chi_{E_s}(x) d\mu(x) \\
&= C^2 \sum_{s \in E^{\prime}_{x_0}\cap S} \partial_S \theta (s) \geq C^2 .
\end{align*}
Therefore, the $\mu$-1 chain $c/C^2$ is a $\mu$-PS.
\qed
\end{proof}

\subsection{Amenability of discrete groups and $\mu$-PS}
By using Theorem~\ref{thm:muPS}, we prove the relationship between the amenability of a discrete group $\Gamma$ and the existence of a $\mu$-PS on a space $X$ with respect to $\Gamma$-invariant measure $\mu$ as follows:
\begin{corollary}\label{cor:gpactmuPS}
Let a discrete group $\Gamma$ act properly and cocompactly on a locally compact second-countable Hausdorff space $X$, and $\mu$ be a $\Gamma$-invariant non zero regular measure on $X$. Then $\Gamma$ is non-amenable if and only if the coarse space $(X,\mathcal{E}^X_{\Gamma})$ has a $\mu$-PS.
\end{corollary}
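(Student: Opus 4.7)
The plan is to reduce Corollary~\ref{cor:gpactmuPS} to Theorem~\ref{thm:muPS} through the coarse equivalence $f\colon(\Gamma,\mathcal{E}^{\Gamma}_R)\to(X,\mathcal{E}^X_\Gamma)$, $g\mapsto gx_0$, combined with the Block--Weinberger characterization of amenability of countable discrete groups via Ponzi schemes. First I would note that $\Gamma$ is automatically countable: since $X$ is locally compact and second-countable it is $\sigma$-compact, and the Lindel\"of argument already carried out in the proof for $\mathcal{E}^X_G$ yields $\Gamma = \bigcup_i G_{K_i}$, a countable union of compact and hence finite subsets of the discrete group $\Gamma$. In particular the classical equivalence ``$\Gamma$ non-amenable $\Leftrightarrow$ $(\Gamma,\mathcal{E}^\Gamma_R)$ admits a Ponzi scheme'' is available, and by coarse invariance of Ponzi schemes (cited as a Fact in the paper) this transfers to $(X,\mathcal{E}^X_\Gamma)$ via $f$.

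Fix a basepoint $x_0\in X$ and set $S:=\Gamma x_0$; this single orbit will play the role of both quasi lattice and Ponzi-scheme support. For uniform local finiteness, given any $E\subset E_{\Gamma,C}$ and $x\in X$, cocompactness supplies $g_x\in\Gamma$ with $x\in g_xC$, and a short computation (mimicking the one in the proof that $\mu$ is uniform for $\mathcal{E}^X_G$) gives $E_x\cap S\subseteq g_x G_{C\cup\{x_0\}}\!\cdot x_0$, a finite set since $G_{C\cup\{x_0\}}$ is compact inside the discrete group $\Gamma$. For the quasi-lattice property, cocompactness furnishes a compact $K\subseteq X$ with $X=\Gamma K$, so $X = E_{\Gamma,K\cup\{x_0\}}[S]$.

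For the direction $(\Leftarrow)$, assume $X$ admits a $\mu$-PS. Combining with the quasi lattice $S$, Theorem~\ref{thm:muPS}(1) produces a Ponzi scheme on $X$, which transports across $f$ to a Ponzi scheme on $\Gamma$, whence $\Gamma$ is non-amenable. For the direction $(\Rightarrow)$, if $\Gamma$ is non-amenable then $\Gamma$ carries a Ponzi scheme which pushes forward along $f$ to a Ponzi scheme $\theta$ on $X$ with $\supp\theta\subseteq S\times S$. To invoke Theorem~\ref{thm:muPS}(2) I still need a measurable controlled set constant on $S$ with respect to $\mu$ with positive constant. I would take
\[ E \;:=\; \bigcup_{\gamma\in\Gamma}\gamma K\times\gamma K \]
for a compact $K\ni x_0$ with $\mu(K)>0$; such $K$ exists because $X=\Gamma K_0$ together with $\Gamma$-invariance of $\mu$ would force $\mu\equiv 0$ if the chosen $K_0$ had measure zero, contradicting $\mu\neq 0$. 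This $E$ is a countable union of Borel rectangles, hence measurable; it lies in $E_{\Gamma,K}\in\mathcal{E}^X_\Gamma$; and by the joint $\Gamma$-invariance of $E$ and $\mu$,
\[ \mu(E_{\gamma x_0}) \;=\; \mu(\gamma E_{x_0}) \;=\; \mu(E_{x_0}) \;\geq\; \mu(K) \;>\; 0 \]
for every $\gamma\in\Gamma$, yielding constancy on $S$ with a strictly positive constant. Theorem~\ref{thm:muPS}(2) then delivers the desired $\mu$-PS on $X$.

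The main obstacle I anticipate is carefully verifying that the push-forward of a Ponzi scheme on $\Gamma$ to one on $X$ lands in $S\times S$ when the stabilizer $\Gamma_{x_0}$ is nontrivial, so that $g\mapsto gx_0$ is not injective; this is handled by averaging over the fibers of $f$, which are finite because the action is proper and $\Gamma$ is discrete, introducing only a bounded multiplicative factor that can be absorbed into the scheme. Everything else is bookkeeping once countability of $\Gamma$, the quasi-lattice structure of $\Gamma x_0$, and the joint $\Gamma$-invariance of $E$ and $\mu$ are in hand.
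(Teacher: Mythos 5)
Your proposal is correct and follows essentially the same route as the paper: both directions reduce to Theorem~\ref{thm:muPS} via the coarse equivalence $\gamma\mapsto\gamma x_0$ and Roe's characterization of non-amenability by Ponzi schemes, with the orbit $\Gamma x_0$ serving as quasi lattice (for $\Leftarrow$) and as support carrying a constant measurable controlled set (for $\Rightarrow$). The only cosmetic difference is your choice $E=\bigcup_{\gamma}\gamma K\times\gamma K$ versus the paper's $E_S=\bigcup_{s\in S}\{s\}\times\gamma_s C_0$; both have orbit slices of constant positive measure by $\Gamma$-invariance of $\mu$, and your explicit remarks on countability of $\Gamma$ and on averaging over the finite fibers of $\gamma\mapsto\gamma x_0$ are sound supplements rather than departures.
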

We review the definition of amenability of discrete groups:
\begin{definition}{\rm (\cite[Appx.\ G]{BDV08})}
Let $G$ be a discrete group.
\begin{enumerate}
\item A linear functional $m$ on $l^{\infty}(G)$ is called a mean if it satisfies the following conditions:
\begin{itemize}
\item $m(f) \geq 0$, for all non negative function $f$.
\item $m(\chi)= 1$, where $\chi$ is the constant function whose value is $1$.
\end{itemize}
Moreover, the mean $m$ is called $G$-invariant if $m(g\cdot f) = m(f)$ for any $g\in G, f \in l^{\infty}(G)$, where $g\cdot f$ denotes the left-translation of $f$ by $g$.
\item The group $G$ is amenable if it admits a $G$-invariant mean.
\end{enumerate}
\end{definition}
In order to prove the corollary, we recall the following fact:
\begin{fact}{\rm (\cite[Thm.\ 3.53]{Roe03})}\label{fact:ame}
Let $\Gamma$ be a discrete group. The group $\Gamma$ is non-amenable if and only if the coarse space $(\Gamma,\mathcal{E}^{\Gamma}_R)$ admits a Ponzi scheme.
\end{fact}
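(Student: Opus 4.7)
The plan is to assemble three equivalences into one chain. First, Fact~\ref{fact:ame} gives $\Gamma$ non-amenable $\iff$ $(\Gamma,\mathcal{E}^\Gamma_R)$ admits a Ponzi scheme. Second, the orbit map $f\colon\Gamma\to X$, $g\mapsto gx_0$, is a coarse equivalence from $(\Gamma,\mathcal{E}^\Gamma_R)$ to $(X,\mathcal{E}^X_\Gamma)$ (stated as an example in the preliminaries), and Ponzi schemes are preserved under coarse equivalence (the Fact cited in Section~5), so $(\Gamma,\mathcal{E}^\Gamma_R)$ admits a Ponzi scheme $\iff$ $(X,\mathcal{E}^X_\Gamma)$ admits one. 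Third, I will use the two halves of Theorem~\ref{thm:muPS} to turn this into the equivalence with the existence of a $\mu$-PS on $X$.

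For the direction Ponzi scheme on $X$ $\Rightarrow$ $\mu$-PS on $X$ (and hence non-amenable $\Rightarrow$ $\mu$-PS), I would take $S:=\Gamma x_0$ for any fixed $x_0\in X$ and show that the Ponzi scheme produced by pushing forward along $f$ has support in $S\times S$, so that $S$ plays the role of the uniform locally finite subset in Theorem~\ref{thm:muPS}(2). Then I need to exhibit a measurable controlled set $E$ that is constant on $S$ with respect to $\mu$. Fix a compact $K\subset X$ with $x_0\in K$ and $X=\Gamma K$, and take $E:=E_{\Gamma,K}=\bigcup_{g\in\Gamma}gK\times gK$. Because $\Gamma$ is discrete (hence countable, as $X$ is second-countable and the action is cocompact), $E$ is a countable union of Borel sets and therefore lies in $\mathcal{B}(X)\times\mathcal{B}(X)$; it is visibly $\Gamma$-invariant, so $E_{gs}=g\cdot E_s$, and the $\Gamma$-invariance of $\mu$ yields $\mu(E_s)=\mu(E_{x_0})$ for every $s\in S$. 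Finiteness of this common value follows from the uniformity of $\mu$ established in the relevant example. Theorem~\ref{thm:muPS}(2) then produces the desired $\mu$-PS.

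For the converse direction, $\mu$-PS on $X$ $\Rightarrow$ Ponzi scheme on $X$ (and hence $\Rightarrow$ $\Gamma$ non-amenable), I would apply Theorem~\ref{thm:muPS}(1) with the same quasi lattice $\Lambda:=\Gamma x_0$. Uniform local finiteness of $\Lambda$ reduces to showing that $\{h\in\Gamma:h x_0\in C\}$ is finite for every compact $C\subset X$, which is immediate from properness of the $\Gamma$-action together with discreteness of $\Gamma$; the quasi lattice condition $X=E_{\Gamma,K}[\Lambda]$ follows from $X=\Gamma K$. The main technical obstacle I expect is bookkeeping around the orbit map: checking that $f$ is coarsely surjective, effectively proper, and bornologous so that it can be used as a coarse equivalence in both directions, and verifying that pushing a Ponzi scheme on $\Gamma$ forward through $f$ produces a uniform $1$-chain whose support is contained in $S\times S$ with the same $S$ that is used for $E$. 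Once these compatibilities are in place, the three equivalences compose to give the corollary.
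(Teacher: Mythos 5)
There is a genuine gap, and it is fatal: your argument is circular. The statement to be proved \emph{is} Fact~\ref{fact:ame} --- that a discrete group $\Gamma$ is non-amenable if and only if $(\Gamma,\mathcal{E}^{\Gamma}_R)$ admits a Ponzi scheme --- yet the very first link in your chain of equivalences is an appeal to Fact~\ref{fact:ame} itself. You have assumed the conclusion. What you have actually sketched is a proof of Corollary~\ref{cor:gpactmuPS} (you even close with ``the three equivalences compose to give the corollary''), and as a proof of \emph{that} statement your outline mirrors the paper's own argument quite faithfully: the orbit map $\gamma\mapsto\gamma x_0$ as a coarse equivalence transporting Ponzi schemes, a $\Gamma$-invariant measurable controlled set (the paper uses a subset $E_S\subset E_{\Gamma,C_0}$, you use $E_{\Gamma,K}$ itself) that is constant on $S=\Gamma x_0$ so that Theorem~\ref{thm:muPS}(2) applies, and the orbit $\Lambda=\Gamma x_1$ as a quasi lattice so that Theorem~\ref{thm:muPS}(1) applies. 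But none of that machinery, which only moves Ponzi schemes and $\mu$-PSs between coarsely equivalent spaces, says anything about why non-amenability of the group produces a Ponzi scheme in the first place, or why amenability excludes one.

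Note also that the paper offers no proof of Fact~\ref{fact:ame} for you to fall back on: it is quoted from \cite[Thm.\ 3.53]{Roe03}, and its proof requires tools absent from your proposal. For the direction ``amenable $\Rightarrow$ no Ponzi scheme,'' one pairs an invariant mean $m$ on $\ell^{\infty}(\Gamma)$ with the boundary: a uniform $1$-chain $\theta$ supported on $\{(x,y): x^{-1}y\in K\}$ with $K$ finite decomposes into finitely many translate-differences, so $m(\partial\theta)=0$ by invariance; on the other hand, effectiveness gives a finite $K'$ with $\sum_{k\in K'}\partial\theta(xk)\geq 1$ for all $x$, and applying $m$ yields $\#K'\cdot m(\partial\theta)\geq 1$, a contradiction. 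For ``non-amenable $\Rightarrow$ Ponzi scheme,'' one uses the failure of the F\o lner condition to produce a finite set $K$ with $\#(KF)\geq 2\#F$ for every finite $F\subset\Gamma$, and then Hall's marriage theorem (or a compactness argument) to build a bounded-displacement map under which every point has at least two preimages; the associated flow is a uniform $1$-chain whose boundary is bounded below by $1$ everywhere. A correct submission would either reproduce such an argument or state clearly that the Fact is an external result being cited, not proved.
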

{\itshape Proof of Corollary \ref{cor:gpactmuPS}.}
Assume that $\Gamma$ is non-amenable. We take a compact set $C_0$ in $X$ such that $\mu(C_0) > 0$, and $x_0 \in C_0$. If such a compact set does not exist, then we obtain $\mu(X) = 0$ since the space $X$ is $\sigma$-compact, which contradicts the assumption of the measure $\mu$. Let $S = \{ \gamma x_0 \in X : \gamma \in \Gamma \}$. For each $s \in S$. we choose $\gamma_s \in \Gamma$ such that $s=\gamma_s x_0$. We write $E_S := \bigcup_{s\in S} \{s\}\times \gamma_s C_0$. 
This is a measurable set since $E_S$ is closed by the properness of the $\Gamma$ action. Moreover, it is a controlled set since it is contained in $E_{\Gamma,C_0}$.
By Fact~\ref{fact:ame}, $\Gamma$ has a Ponzi scheme. Since the map
\[ (\Gamma,\mathcal{E}^{\Gamma}_R) \ni \gamma \mapsto \gamma x_0 \in  (X,\mathcal{E}^X_{\Gamma})\]
is coarsely equivalent, the coarse space $(X,\mathcal{E}^X_{\Gamma})$ has a Ponzi scheme with support $S$. We have 
\[ \mu\left( (E_S)_s\right) = \mu(\gamma_s C_0) = \mu(C_0).\]
Thus, the measurable controlled set $E_S$ is constant on $S$ with respect to $\mu$. By Theorem~\ref{thm:muPS}, the coarse space $(X,\mathcal{E}^X_{\Gamma})$ has a $\mu$-PS.

Assume the contrary. We take a compact set $C_1$ in $X$ such that $X=\Gamma C_1$, and fix $x_1 \in C_1$. Let $\Lambda = \{ \gamma x_1 : \gamma \in \Gamma \}$. We claim that $\Lambda$ is a quasi lattice. Indeed, we have 
\[ E_{\Gamma,C_1}[\Lambda] = \bigcup_{\gamma \in \Gamma} (E_{\Gamma,C_1})_{\gamma x_1} \supset \bigcup_{\gamma \in \Gamma} \gamma C_1 = X  .\]
Let a compact set $C \subset X$ satisfy $X=\Gamma C$. We take $\gamma_x \in \Gamma$ for each $x \in X$ such that $x \in \gamma_x C$. We have 
\begin{align*}
    \# (E_{\Gamma,C})_x \cap \Lambda &\leq \# \{ \gamma \in \Gamma : \gamma x_1 \in \gamma_x \Gamma_C C\} \\
    &= \# \{ \gamma \in \Gamma : \gamma x_1 \in \Gamma_C C \}
\end{align*}
The right-hand side is independent of $x$ and finite since the $\Gamma$-action is proper. Thus, $\Lambda$ is uniform locally finite. Therefore, $\Lambda$ is a quasi lattice. By Theorem~\ref{thm:muPS}, the coarse space $(X,\mathcal{E}^X_{\Gamma})$ has a Ponzi scheme. Since it is coarsely equivalent to $(\Gamma ,\mathcal{E}^{\Gamma}_R)$, $(\Gamma,\mathcal{E}^{\Gamma}_R)$ also has a Ponzi scheme. By Fact~\ref{fact:ame}, the discrete group $\Gamma$ is non-amenable. 
\qed

\subsubsection{Acknowledgements} The author expresses sincere gratitude to the organizers, Professor Ali Baklouti and Professor Hideyuki Ishi, for their warm hospitality during the 7th Tunisian Japanese conference in honor of Professor Toshiyuki Kobayashi held in Monastir (Tunisia). 

The author express sincere gratitude to Professor Toshiyuki Kobayashi for his deep insights, generous support, and constant encouragement. The author also would like to thank Takayuki Okuda for providing him solid advice for his study, V\'ictor P\'erez-Vald\'es for correcting his writing, and Federico Vigolo for providing valuable comments on his preprint.

%
%

\providecommand{\bysame}{\leavevmode\hbox to3em{\hrulefill}\thinspace}
\providecommand{\MR}{\relax\ifhmode\unskip\space\fi MR }
\providecommand{\MRhref}[2]{%
  \href{http://www.ams.org/mathscinet-getitem?mr=#1}{#2}
}
\providecommand{\href}[2]{#2}

\bibliographystyle{myamsalpha}

\end{document}